\numberwithin{equation}{section}
\newcommand{\D}{{\mathbb D}}
\newcommand{\R}{{\mathbb R}}
\newcommand{\C}{{\mathbb C}}
\newcommand{\Z}{{\mathbb Z}}
\newcommand{\K}{{\mathbb K}}
\renewcommand{\d}{\partial}
\newcommand{\f}{\varphi}
\newtheorem{theo}{{\sc \bf Theorem}}[section]
\newtheorem{cor}[theo]{{\sc \bf Corollary}}
\newtheorem{lem}[theo]{{\sc \bf Lemma}}
\newtheorem{prop}[theo]{{\sc \bf Proposition}}
\newenvironment{defin}{\medskip\noindent{\it Definition:\/} }{\medskip}
\begin{document}

\title[Dirac type operators on the quantum solid torus]{Dirac type operators on the quantum solid torus with global boundary conditions}

\author{Slawomir Klimek}
\address{Department of Mathematical Sciences,
Indiana University-Purdue University Indianapolis,
402 N. Blackford St., Indianapolis, IN 46202, U.S.A.}
\email{sklimek@math.iupui.edu}

\author{Matt McBride}
\address{Department of Mathematics and Statistics,
Mississippi State University,
175 President's Cir., Mississippi State, MS 39762, U.S.A.}
\email{mmcbride@math.msstate.edu}

\thanks{}

\date{\today}

\begin{abstract}
We define a noncommutative space we call the quantum solid torus. It is an example of a noncommutative manifold with a noncommutative boundary.
We study quantum Dirac type operators subject to Atiyah-Patodi-Singer like boundary conditions on the quantum solid torus. We show that such operators have compact inverse, which means that the corresponding boundary value problem is elliptic.
\end{abstract}

\maketitle
\section{Introduction}
In this paper we are continuing our study of quantum analogs of Dirac type operators on manifolds with boundary and global boundary conditions of Atiyah, Patodi, and Singer type \cite{APS}. Despite similarities with our previous papers on the subject, the problem studied here is much more complex. We consider a higher dimensional space, namely three dimensional solid torus, and its quantization. The resulting space is noncommutative and its boundary is also noncommutative. Dirac type operators are matrix valued operators and the Atiyah-Patodi-Singer condition cannot be generalized straightforwardly.

One of the main objectives of our previous work was to find appropriate non-commutative analogs of Dirac type operators on planar domains with boundary and to investigate their functional analytic properties.   Some examples of Dirac type operators in simple domains, such as the disk, annulus, and punctured disk were described in \cite{KM1} and \cite{KM2}.   These papers exhibited strong similarities in the setup and the results between the commutative and quantum cases.   Also in those papers the global boundary condition imposed on Dirac-like operators was essentially the classical APS boundary condition.  In reference \cite{KM4} we discussed Dirac type operators on the solid torus, in the commutative sense, with a different nonlocal boundary condition, that was inspired by \cite{MS}. This is because the natural metric on the disk does not have the  product structure near boundary, required for APS theory. This was not a problem for two dimensional domains but becomes an issue in dimension three. In this paper we follow up the analysis \cite{KM4} of the operators on the commutative solid torus, with same type of investigation for their quantum analogs.

Instead of constructing a quantum analog of the non-local boundary condition that was used in \cite{KM4}, we consider a large class of boundary conditions that yield desired analytical properties of the parametrices of the quantum Dirac operators. The boundary condition of \cite{KM4} required extending functions in the domain beyond the boundary of the solid torus, which makes sense geometrically in the classical case.   However in the quantum analog, there are obvious obstructions, for example what does we mean by the ``outside'' of the boundary of the quantum solid torus.   This will be addressed in our future work.

The solid torus is the product of a disk and a circle. In quantum case  we take a noncommutative disk and then a twisted product of it with the unit circle to produce the quantum solid torus.	 The operators we consider respect this decomposition. Unlike our previous work on d-bar operators on the quantum disk \cite{KM1}, we consider here a bigger variety of related Dirac like operators on the disk. They yield a wide class of operators on the solid torus.   The crux of the analysis presented here  is the proof that the parametrices of our Dirac type operators are compact operators, like it was shown in \cite{KM4} that the parametrix of the classical Dirac type operator was also compact.  

The paper is organized as follows.   In section 2 the quantum solid torus is introduced and discussed.  This is followed by section 3 on partial Fourier series on the quantum solid torus. The relevant Hilbert spaces are also defined there. Section 4  introduces our Dirac type operators, while section 5 contains a discussion of some properties of their coefficients.   The boundary conditions are defined in section 6, based on properties of special solutions introduced there.   The computation of the parametrix of the Dirac type operator is described in section 7.   Finally, section 8 contains analysis of a parametrix culminating in the proof of the main theorem on compactness. 

\section{Non-commutative solid Torus}
In this section we define our version of the quantum solid torus.   From the topological point of view such a quantum torus is a noncommutative $C^*-$algebra which has a structure similar in some ways to the ordinary solid torus.  The idea here is simple:  the solid torus can be thought of as the product of the disk and the circle.   To obtain a quantum solid torus we take the quantum disk of \cite{KL} and take its twisted product with the circle.  The result can be described either as a suitable crossed product algebra or as a universal $C^*-$algebra with generators and relations.  We proceed to describe them now.

\begin{defin}
Let $A_\theta$ be the universal $C^*-$algebra with generators $U$ and $V$ such that $U$ is an isometry i.e. $U^*U=I$, $V$ is a unitary i.e. $V^*V=VV^*=I$ and such that they satisfy the commutation relation 
\begin{equation*}
VU = e^{2\pi i\theta} UV.
\end{equation*}
\end{defin}

Let $\{e_k\}$ be the canonical basis for $\ell^2(\Z_{\ge0})$ and let $W$ be the unilateral shift, i.e. $We_k = e_{k+1}$.   Let $\mathcal{T}$ be the $C^*-$algebra generated by $W$. The algebra $\mathcal{T}$ is called the Toeplitz algebra and by Coburn's theorem it is the universal $C^*-$algebra with generator $W$ satisfying the relation $W^*W=I$, i.e. an isometry.  Reference \cite{KL} shows that this algebra can be thought of as a quantum unit disk.   It's structure is described by a short exact sequence, namely:

\begin{equation}\label{TExact}
0\longrightarrow \mathcal{K} \longrightarrow \mathcal{T} \longrightarrow C(S^1) \longrightarrow 0.
\end{equation}
where $ \mathcal{K} $ is the ideal of compact operators in $\ell^2(\Z_{\ge0})$.  In fact $\mathcal{K}$ is the commutator ideal of the algebra $\mathcal{T}$.

\begin{prop}
For any $\theta$ we have the following isomorphism of algebras: $A_\theta \cong \mathcal{T} \rtimes_\theta \Z$ where the crossed product on the right hand side is defined, for $n\in\Z$, by the automorphisms $\Theta_n(U) = e^{2\pi in\theta}U$ for some $\theta\in[0,2\pi)$.
\end{prop}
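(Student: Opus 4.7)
The plan is to prove the isomorphism by exhibiting mutually inverse homomorphisms $\phi: A_\theta \to \mathcal{T}\rtimes_\theta\Z$ and $\psi:\mathcal{T}\rtimes_\theta\Z \to A_\theta$, using the universal properties of both sides. Everything will come down to checking the defining relations on generators.

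First I would construct $\phi$. Inside $\mathcal{T}\rtimes_\theta\Z$ we have the copy of $W$, which is an isometry, and we have the unitary $u\in\mathcal{T}\rtimes_\theta\Z$ implementing $\Theta_1$, so $u W u^* = \Theta_1(W) = e^{2\pi i\theta}W$, which rewrites as $uW = e^{2\pi i\theta}Wu$. Setting $U\mapsto W$ and $V\mapsto u$ therefore produces an isometry and a unitary in $\mathcal{T}\rtimes_\theta\Z$ satisfying the defining relation of $A_\theta$, so the universal property of $A_\theta$ yields a well defined $*$-homomorphism $\phi: A_\theta \to \mathcal{T}\rtimes_\theta\Z$.

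Next I would construct $\psi$ via the universal property of the crossed product, i.e.\ by writing down a covariant pair $(\pi, v)$ inside $A_\theta$. Since $U^*U=I$ in $A_\theta$, Coburn's theorem (the universal property of $\mathcal{T}$ given in (\ref{TExact})) produces a $*$-homomorphism $\pi:\mathcal{T}\to A_\theta$ with $\pi(W)=U$. Define $v:\Z\to A_\theta$ by $v_n = V^n$; these are unitaries because $V$ is. The relation $VU=e^{2\pi i\theta}UV$ gives $VUV^*= e^{2\pi i\theta}U$, so by induction $v_n U v_n^* = e^{2\pi in\theta}U = \pi(\Theta_n(W))$; since $W$ generates $\mathcal{T}$, this covariance extends to all of $\mathcal{T}$. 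The universal property of $\mathcal{T}\rtimes_\theta\Z$ then furnishes a $*$-homomorphism $\psi:\mathcal{T}\rtimes_\theta\Z\to A_\theta$ with $\psi(W)=U$ and $\psi(u)=V$.

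Finally I would check that $\phi$ and $\psi$ are mutually inverse by evaluating on the generating sets. By construction $\psi\circ\phi(U)=\psi(W)=U$ and $\psi\circ\phi(V)=\psi(u)=V$, while $\phi\circ\psi(W)=\phi(U)=W$ and $\phi\circ\psi(u)=\phi(V)=u$; since $\{U,V\}$ generates $A_\theta$ and $\mathcal{T}\cup\{u\}$ generates $\mathcal{T}\rtimes_\theta\Z$, both compositions are the identity. The main subtlety, and the only point requiring genuine care, is in the construction of $\psi$: one needs the \emph{full} crossed product so that the universal property for covariant representations applies, and one needs $\pi$ to be well defined on all of $\mathcal{T}$ (not just on the polynomial $*$-algebra generated by $W$), which is exactly what Coburn's theorem supplies. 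With that in place, there is no hard analytic step—everything is forced by the universal properties once the covariant pair is written down.
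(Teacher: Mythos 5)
Your proof is correct and takes essentially the same approach as the paper's: both rest on the identification of representations of $A_\theta$ with covariant representations of the dynamical system $(\Z,\Theta,\mathcal{T})$. The paper expresses this by observing that the two maximal $C^*$-norms on the polynomial $*$-algebra coincide, whereas you make the same universality argument explicit by exhibiting mutually inverse $*$-homomorphisms $\phi$ and $\psi$.
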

Note that because of the universality of the Toeplitz algebra, we only need to define an automorphism on its generator and check that it satisfies the defining relation.

\begin{proof}
By definition, a representation $\pi$ of the universal $C^*-$algebra $A_\theta$, consists of a Hilbert space, $H$ and bounded operators  $\pi(U)$ and $\pi(V)$ on $H$, that satisfy the defining relations of $A_\theta$.  Then $A_\theta$ is the completion of the algebra of polynomials $a$ in $U$ and $V$ and their adjoints with respect to the maximal norm, i.e.
\begin{equation*}
\|a\|_{\textrm{max}} = \underset{\pi}{\textrm{sup }}\|\pi(a)\|.
\end{equation*}
On the other hand, the covariant representation of the dynamical system $(\Z, \Theta_n, \mathcal{T})$ consists of a Hilbert space $X$, a bounded representation $\rho$ of $\mathcal{T}$ on $X$, and a unitary operator $\rho(V)$ on $X$, such that  the following commutation relation holds:
$$\rho(\Theta_1(b)) = \rho(V)\rho(b)\rho(V)^*,$$ 
for any $b\in\mathcal{T}$. 
By the universality of the Toeplitz algebra, its representation $\rho$ is completely determined by an isometry $\rho(U)$ i.e. $\rho(U)\rho(U)^* = I$.   Moreover, the above commutation relation with $\rho(V)$ becomes the following: 
$$e^{2\pi i\theta}\rho(U) = \rho(V)\rho(U)\rho(V)^*.$$   
Consequently the crossed product $\mathcal{T}\rtimes_\theta \Z$ is obtained by completion of the algebra of polynomials $a$ in $U$ and $V$ and their adjoints with respect to the maximal norm, i.e.
\begin{equation*}
\|a\|_{\textrm{max}} = \underset{\rho}{\textrm{sup }}\|\rho(a)\|.
\end{equation*}
So we see that the two concepts are identical, establishing an isomorphism between the algebras, and completing the proof.
\end{proof}

By the amenability of $\Z$, the crossed product of the Toeplitz algebra with $\Z$ is equal to the reduced crossed product of the Toeplitz algebra with $\Z$.   Consequently, the norm of an element of the crossed product can computed from a single representation by combining a faithful representation of the Toeplitz algebra with the left regular representation of $\Z$.  Explicitly, choosing the defining representation of the Toeplitz algebra, we can view the reduced crossed product as the $C^*-$algebra generated by the operators described below acting in the Hilbert space $\ell^2(\Z,\ell^2(\Z_{\ge0})) = \ell^2(\Z)\otimes\ell^2(\Z_{\ge0}) = \ell^2(\Z_{\ge0}\times\Z)$, where the equality of Hilbert spaces follows from \cite{RS}. 

\begin{cor}\label{Athetaconcrete}
Let $\{e_{k,l}\}$ be the canonical basis in $\ell^2(\Z_{\geq 0}\times\Z)$.   Defining two operators: $Ue_{k,l} = e^{-2\pi il\theta}e_{k+1,l}$ and $Ve_{k,l} = e_{k,l+1}$, we have $A_\theta = C^*(U,V)$.
\end{cor}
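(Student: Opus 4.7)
The plan is to show that the prescribed operators on $\ell^2(\Z_{\geq 0}\times\Z)$ are nothing but the standard faithful representation of the reduced crossed product $\mathcal{T}\rtimes_{\theta,r}\Z$, and to combine this with the amenability of $\Z$ together with the preceding proposition identifying $A_\theta$ with the full crossed product $\mathcal{T}\rtimes_\theta\Z$.

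First I would verify that the operators $U$ and $V$ defined on the basis $\{e_{k,l}\}$ actually satisfy the defining relations of $A_\theta$. A direct computation on basis vectors gives $U^*e_{k,l}=e^{2\pi il\theta}e_{k-1,l}$ for $k\geq 1$ and $U^*e_{0,l}=0$, so $U^*U=I$; the operator $V$ is manifestly unitary with $V^*e_{k,l}=e_{k,l-1}$; and
\begin{equation*}
VUe_{k,l}=e^{-2\pi il\theta}e_{k+1,l+1},\qquad UVe_{k,l}=e^{-2\pi i(l+1)\theta}e_{k+1,l+1},
\end{equation*}
which yields $VU=e^{2\pi i\theta}UV$. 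By universality of $A_\theta$, this produces a $*$-homomorphism $\pi:A_\theta\to B(\ell^2(\Z_{\geq 0}\times\Z))$ with $\pi(U)=U$, $\pi(V)=V$, and hence $C^*(U,V)$ appearing inside the Hilbert space is a quotient of $A_\theta$.

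The remaining, and key, point is faithfulness of $\pi$. For this I would identify $\pi$ with the regular representation of the crossed product. Writing $\ell^2(\Z_{\geq 0}\times\Z)\cong\ell^2(\Z,\ell^2(\Z_{\geq 0}))$ via $e_{k,l}\leftrightarrow\de_l\otimes e_k$, the general formula for the reduced crossed product coming from a faithful representation $\rho$ of $\mathcal{T}$ on $\ell^2(\Z_{\geq 0})$ (taken here to be the defining representation sending the generator to the shift $W$) is
\begin{equation*}
(\tilde\rho(b)\xi)(l)=\rho(\Theta_{-l}(b))\xi(l),\qquad (\la_m\xi)(l)=\xi(l-m).
\end{equation*}
Applied to the generator, $\tilde\rho(U)(\de_l\otimes e_k)=\de_l\otimes e^{-2\pi il\theta}We_k=e^{-2\pi il\theta}(\de_l\otimes e_{k+1})$, and $\la_1(\de_l\otimes e_k)=\de_{l+1}\otimes e_k$. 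These agree with the formulas given for $U$ and $V$ in the statement, so $\pi$ coincides with the canonical faithful representation of the reduced crossed product $\mathcal{T}\rtimes_{\theta,r}\Z$.

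Finally I would invoke amenability of $\Z$ to conclude that the full and reduced crossed products coincide, $\mathcal{T}\rtimes_\theta\Z=\mathcal{T}\rtimes_{\theta,r}\Z$, so that the previous proposition combined with faithfulness of the regular representation gives $A_\theta\cong\pi(A_\theta)=C^*(U,V)$, proving the corollary. The main conceptual obstacle is essentially a bookkeeping one: matching the two a priori different descriptions (universal algebra versus the concrete regular representation) via the correct conventions for the automorphism, which is why I would spell out the formula for $\tilde\rho$ above before identifying the operators.
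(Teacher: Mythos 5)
Your proof is correct and follows essentially the same route as the paper: both first identify $A_\theta$ with the full crossed product $\mathcal{T}\rtimes_\theta\Z$ (the preceding proposition), then invoke amenability of $\Z$ to pass to the reduced crossed product, and finally recognize the displayed operators $U,V$ on $\ell^2(\Z_{\geq 0}\times\Z)\cong\ell^2(\Z,\ell^2(\Z_{\geq 0}))$ as the regular representation built from the defining (faithful) representation of $\mathcal{T}$. The only difference is that you spell out the verification of the relations and the explicit formula $(\tilde\rho(b)\xi)(l)=\rho(\Theta_{-l}(b))\xi(l)$, which the paper leaves implicit.
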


For $\theta\in[0,2\pi)$ let $T_\theta^2$ be the two dimensional quantum torus.   In other words, $T_\theta^2$ is the universal $C^*-$algebra with two unitary generators $u$ and $v$ such that $vu = e^{2\pi i\theta}uv$, see for example \cite{Davidson}. The following proposition describes the structure of $A_\theta$.

\begin{prop}
We have the following short exact sequence:

\begin{equation*}
0\longrightarrow \mathcal{K}\otimes C(S^1)\longrightarrow A_\theta\longrightarrow T_\theta^2\longrightarrow 0.
\end{equation*}
\end{prop}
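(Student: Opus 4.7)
The plan is to obtain the desired short exact sequence by applying the crossed product construction with $\Z$ to the Toeplitz extension \eqref{TExact}, and then to identify each of the three resulting algebras. Since the action $\Theta_n$ on $\mathcal{T}$ sends $U$ to a scalar multiple of $U$, the ideal $\mathcal{K}$ is preserved (either because $\mathcal{K}$ is the commutator ideal of $\mathcal{T}$, which is an intrinsic $C^*$-algebraic invariant preserved by every automorphism, or because $\mathcal{K}$ is the unique proper closed two-sided ideal of $\mathcal{T}$). Consequently $\Theta_n$ descends to an action on the quotient $C(S^1)$, and the Toeplitz sequence is equivariant. By amenability of $\Z$, taking crossed products is exact, so we obtain the short exact sequence
\begin{equation*}
0\longrightarrow \mathcal{K}\rtimes_\theta \Z \longrightarrow \mathcal{T}\rtimes_\theta \Z \longrightarrow C(S^1)\rtimes_\theta \Z\longrightarrow 0.
\end{equation*}
By the previous proposition the middle term is exactly $A_\theta$, so it remains to identify the outer terms.

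For the quotient, the induced action on the unitary generator $u$ of $C(S^1) \cong \mathcal{T}/\mathcal{K}$ (the image of $U$) is rotation by $2\pi\theta$, since $\Theta_1(U)=e^{2\pi i\theta}U$. Hence the crossed product $C(S^1)\rtimes_\theta \Z$ is the universal $C^*$-algebra generated by two unitaries $u$ and $v$ satisfying $vu=e^{2\pi i\theta}uv$, which is by definition $T_\theta^2$.

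For the kernel, the key observation is that the restriction of $\Theta_n$ to $\mathcal{K}$ is \emph{inner} in the multiplier algebra $M(\mathcal{K})=B(\ell^2(\Z_{\ge 0}))$. Indeed, let $N$ denote the number operator $Ne_k=ke_k$ and set $u=e^{2\pi i\theta N}\in B(\ell^2(\Z_{\ge 0}))$. A direct computation on basis vectors shows $uWu^*=e^{2\pi i\theta}W$, hence $\Theta_1(a)=uau^*$ for all $a\in\mathcal{T}$, and in particular for all $a\in\mathcal{K}$. Given that the action on $\mathcal{K}$ is implemented by a unitary in $M(\mathcal{K})$, the crossed product untwists: defining $\phi:\mathcal{K}\rtimes_\theta\Z\to \mathcal{K}\otimes C(S^1)$ on generators by $\phi(a)=a\otimes 1$ for $a\in\mathcal{K}$ and $\phi(V)=u\otimes z$, where $z$ is the identity function on $S^1$, one checks that $\phi(V)\phi(a)\phi(V)^*=uau^*\otimes 1=\Theta_1(a)\otimes 1=\phi(\Theta_1(a))$, so $\phi$ is a well-defined $*$-homomorphism. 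Its image contains $\mathcal{K}\otimes 1$ and $u\otimes z$, hence (multiplying by $u^*\otimes 1\in M(\mathcal{K})\otimes 1$, acting on the left of the image via the multiplier structure) it contains $1\otimes z$, which together generate $\mathcal{K}\otimes C(S^1)$; faithfulness follows from the amenability of $\Z$ together with an inverse untwisting argument. This gives $\mathcal{K}\rtimes_\theta\Z\cong \mathcal{K}\otimes C(S^1)$.

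The main obstacle, and the one genuinely using the structure of the action rather than generalities, is the kernel computation: verifying explicitly that $u=e^{2\pi i\theta N}$ implements $\Theta_1$ on the ideal $\mathcal{K}$ and then setting up the untwisting isomorphism $\phi$ correctly at the level of generators and relations. Everything else is a standard application of the exactness of crossed products by amenable groups, combined with the identifications $\mathcal{T}\rtimes_\theta\Z\cong A_\theta$ (already established) and $C(S^1)\rtimes_\theta\Z\cong T_\theta^2$ (the standard presentation of the noncommutative two-torus).
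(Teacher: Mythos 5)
Your proposal is correct and follows essentially the same route as the paper: apply the crossed product by $\Z$ to the Toeplitz extension, invoke exactness by amenability, identify the quotient with $T_\theta^2$, and untwist the kernel to get $\mathcal{K}\otimes C(S^1)$. Your framing of the kernel identification via the inner implementation $u=e^{2\pi i\theta N}\in M(\mathcal{K})$ is the same idea the paper expresses through the explicit operator identity with $e^{-2\pi i\K\theta}$, just presented slightly more conceptually.
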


\begin{proof}
Consider the crossed product of the short exact sequence (\ref{TExact}) for the Toeplitz algebra with $\Z$. We get:

\begin{equation*}
0\longrightarrow \mathcal{K}\rtimes_\theta\Z \longrightarrow \mathcal{T}\rtimes_\theta\Z \longrightarrow C(S^1)\rtimes_\theta\Z \longrightarrow 0.
\end{equation*}
It is well known that $T_\theta^2\cong C(S^1)\rtimes_\theta\Z$, see \cite{Davidson}.   Moreover, it is easy to see that $\mathcal{K}\rtimes_\theta\Z\cong\mathcal{K}\otimes C(S^1)$, see \cite{Williams} for the details. Explicitly, the cross product $\mathcal{K}\rtimes_\theta\Z$ can be ``untwisted" by noticing that, in the tensor product of Hilbert spaces $\ell^2(\Z)\otimes\ell^2(\Z_{\ge0})$, if $a$ is compact in $\ell^2(\Z_{\ge0})$ then:  
\begin{equation*}
(I\otimes a)(V')^n=(I\otimes e^{-2\pi i\K\theta}a)\cdot(V\otimes I)^n.
\end{equation*}
The operators above are $\K e_{k,l} = ke_{k,l}$ and $V'e_{k,l} = e^{-2\pi ik\theta}e_{k,l+1}$.  In addition define the operator $U'e_{k,l} = e_{k+1,l}$, then we have $A_\theta = C^*(U',V')$ by universality, while $e^{-2\pi i\K\theta}a$ is still compact.
Combination of all those facts above yields the above short exact sequence.
\end{proof}

In view of the results about $A_\theta$ contained in this section it natural to call this algebra the quantum solid torus.

\section{Fourier series}

The purpose of this section is to introduce a partial Fourier transform on the quantum torus.  As stated in Corollary \ref{Athetaconcrete}, the algebra $A_\theta$ is generated by the following two operators: $Ue_{k,l} = e^{-2\pi il\theta}e_{k+1,l}$ and $Ve_{k,l} = e_{k,l+1}$, satisfying the relations: $VU = e^{2\pi i\theta}UV$, $U^*U =I$ and $V^*V=VV^* = I$.  We also reuse the following diagonal label operator $\K e_{k,l} = ke_{k,l}$, so, for a bounded function $f: \Z_{\ge0}\to\C$, we have $f(\K) e_{k,l} = f(k)e_{k,l}$.  We have the following two useful commutation relations for a diagonal operator $f(\K)$:

\begin{equation}\label{CommRel}
f(\K)\,U = Uf(\K +1)\textrm{ and }f(\K)\,V = Vf(\K).
\end{equation}
Let $Pol(U,V)$ be the set of all polynomials in $U$, $U^*$, $V$ and $V^*$.  We call a function $f:\Z_{\ge 0}\to\C$ {\it eventually constant}, if there exists a natural number $k_0$ such that $f(k)$ is constant for $k\ge k_0$. The set of all such functions will be denoted by $EC$. Using the above notation we consider the following operators in $\ell^2(\Z_{\geq 0}\times\Z)$ expressed as finite sums:

\begin{equation*}
a = \sum_{n=0,|m|\le M}^N V^mU^nf_{m,n}^+(\K) + \sum_{n=1,|m|\le M}^Nf_{m,n}^-(\K)V^m(U^*)^n
\end{equation*}
for some $M,N\geq 0$ and $f_{m,n}^\pm(k)\in EC$.
Let $\mathcal{A}$ be the set of all such operators.
We have the following proposition:

\begin{prop}
$\mathcal{A}= Pol(U,V)$.
\end{prop}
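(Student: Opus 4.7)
The plan is to prove the two inclusions separately, with the main technical tool throughout being the projection $P_0:=I-UU^*$, which is nontrivial precisely because $U$ is only an isometry, not a unitary.

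For the easy inclusion $\mathcal{A}\subseteq Pol(U,V)$: a direct computation using Corollary \ref{Athetaconcrete} shows $P_0 e_{k,l}=\delta_{k,0}e_{0,l}$, so $P_j:=U^j(I-UU^*)(U^*)^j$ is the diagonal projection onto the subspace $\{k=j\}$, and both $P_0$ and $P_j$ lie in $Pol(U,V)$. For an eventually constant function $f$ with $f(k)=c$ for $k\ge k_0$, the finite sum
$$f(\K)=cI+\sum_{j=0}^{k_0-1}\bigl(f(j)-c\bigr)P_j$$
exhibits $f(\K)$ as an element of $Pol(U,V)$. Hence each basic summand $V^mU^nf(\K)$ or $f(\K)V^m(U^*)^n$ of $\mathcal{A}$ is a polynomial in $U$, $U^*$, $V$, $V^*$.

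For the harder inclusion $Pol(U,V)\subseteq\mathcal{A}$: since $\mathcal{A}$ is closed under complex linear combinations (as $EC$ is), it suffices to show every monomial in $U,U^*,V,V^*$ lies in $\mathcal{A}$. I would reduce such a monomial in three steps. First, use $VU=e^{2\pi i\theta}UV$ and $VV^*=V^*V=I$ to push every factor of $V$ and $V^*$ to the left; this only introduces a scalar phase and reduces the monomial to $\lambda V^m w$, where $w$ is a word in $U$ and $U^*$ alone. Second, apply $U^*U=I$ repeatedly to collapse every internal $U^*U$ pair; a straightforward induction on length shows that any such word simplifies to $U^a(U^*)^b$ for some $a,b\ge 0$. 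Third, read off from the basis action
$$U^a(U^*)^b\,e_{k,l}=\begin{cases}e^{2\pi i(b-a)l\theta}\,e_{k+a-b,l}, & k\ge b,\\ 0, & k<b,\end{cases}$$
the operator identity
$$U^a(U^*)^b=\begin{cases}U^{a-b}\,\chi_{\{\K\ge b\}}, & a\ge b,\\ \chi_{\{\K\ge a\}}\,(U^*)^{b-a}, & a<b,\end{cases}$$
where the indicator $\chi_{\{\K\ge j\}}$ is manifestly a member of $EC$. Reinstating the factor $V^m$ on the left and, when needed, invoking the commutation $V^mf(\K)=f(\K)V^m$ from (\ref{CommRel}), the monomial falls into one of the two canonical forms defining $\mathcal{A}$.

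The only step carrying genuine content is the final identification of $U^a(U^*)^b$ as a one-sided pure power of $U$ or $U^*$ multiplied by an indicator of $\K$; everything else is algebraic bookkeeping using (\ref{CommRel}) and Corollary \ref{Athetaconcrete}. The mild subtlety is keeping track of the asymmetry between the two inequality cases $a\ge b$ and $a<b$, which forces the two kinds of summands appearing in the definition of $\mathcal{A}$.
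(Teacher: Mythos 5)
Your proof is correct, and for the inclusion $\mathcal{A}\subseteq Pol(U,V)$ it is essentially identical to the paper's: both exhibit $f(\K)$, for $f$ eventually constant, as a finite linear combination of the rank-one-band projections $P_j = U^j(I-UU^*)(U^*)^j$ together with a tail projection; your decomposition $f(\K)=cI+\sum_{j<k_0}(f(j)-c)P_j$ is the paper's $\sum_{k<k_0}f(k)P_k + f_\infty P_{\ge k_0}$ merely rearranged, using $P_{\ge k}=U^k(U^*)^k$. Where you diverge is in the reverse inclusion $Pol(U,V)\subseteq\mathcal{A}$. The paper argues structurally: it asserts, via the commutation relations (\ref{CommRel}), that $\mathcal{A}$ is closed under products and adjoints, hence is a $*$-subalgebra containing $U$ and $V$, hence contains all polynomials. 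You instead run an explicit word-reduction on monomials: move $V^{\pm 1}$ to the left (a harmless phase by the defining relations), collapse $U^*U$ pairs so that any surviving word in $U,U^*$ has no $U^*U$ substring and therefore equals $U^a(U^*)^b$, and then identify $U^a(U^*)^b$ as $U^{a-b}\chi_{\{\K\ge b\}}$ when $a\ge b$ and $\chi_{\{\K\ge a\}}(U^*)^{b-a}$ when $a<b$, with $\chi_{\{\K\ge j\}}\in EC$. This is precisely the verification that the paper's ``it is easy to see'' waves at, carried out at the level of monomials rather than of general products of canonical-form elements, and it has the virtue of making visible why the definition of $\mathcal{A}$ requires two families of summands (the $a\ge b$ versus $a<b$ dichotomy). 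Both routes rest on the same underlying computation; yours is somewhat more self-contained and concrete.
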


\begin{proof}
Using the commutation relations (\ref{CommRel}) it is easy to see that a product of two elements of $\mathcal{A}$ and the adjoint of an element of $\mathcal{A}$ are still in $\mathcal{A}$. It follows that $\mathcal{A}$ is a $*-$subalgebra of $A_\theta$.  Since $U$ and $V$ are in $\mathcal{A}$, it follows that $Pol(U,V)\subset\mathcal{A}$.  To prove the other way inclusion it suffices to show that for any $f\in EC$ the operator $f(\K)$ is in $Pol(U,V)$, as the remaining parts of the sum are already polynomials in $U$, $V$, $U^*$ and $V^*$.  To show that $f(\K)\in Pol(U,V)$,  we decompose any $f(\K)\in EC$ in the following way:

\begin{equation*}
f(\K) = \sum_{k=0}^{k_0-1}f(k)P_k + f_\infty P_{\ge k_0},
\end{equation*}
where $f_\infty = \lim_{k\to\infty}f(k)$, $P_k$ is the orthogonal projection onto $\textrm{span}\{e_{k,l}\}_{l\in\Z}$ and $P_{\ge k_0}$ is the orthogonal projection onto $\textrm{span}\{e_{k,l}\}_{k\ge k_0,l\in\Z}$.  A straightforward calculation shows that $U^k(U^*)^k = P_{\ge k}$ and $P_k = P_{\ge k} - P_{\ge k+1}$.  This completes the proof.
\end{proof}

The above considerations tell us that $a\in\mathcal{A}$ are completely determined by the coefficients $f_{m,n}^\pm(\K)\in EC$ and so it motivates the following definition of a partial Fourier series. For $f\in A_\theta$, define the formal series:

\begin{equation*}
f_{\textrm{series}} = \sum_{n\ge0,m\in\Z} V^mU^nf_{m,n}^+(\K) + \sum_{n\ge1,m\in\Z}f_{m,n}^-(\K)V^m(U^*)^n,
\end{equation*}
where

\begin{equation*}
f_{m,n}^+(k) = \langle e_{k,0}, (U^*)^nV^{-m}fe_{k,0}\rangle \quad\textrm{and}\quad f_{m,n}^-(k) = \langle e_{k,0}, fU^nV^{-m}e_{k,0}\rangle .
\end{equation*}

Similarly to the usual theory of Fourier series, $f_\textrm{series}$ determines $f$ even though in general the series is not norm convergent. Other types of convergence results can be obtained along the lines of the usual Fourier analysis.

We now proceed to describing the Hilbert spaces hosting our Dirac type operators. They are analogs of the $L^2$ spaces on the classical solid torus. Let $\{a_{n}(k)\}$ be a sequence of positive numbers, which we call weights, such that the sum
$$s(n):=\sum_{k=0}^\infty \frac{1}{a_{n}(k)}$$
exists and such that $s(n)$ goes to zero as $n\to\infty$.   
For any formal series define a norm by

\begin{equation*}
\|f_{\textrm{series}}\|^2 = \sum_{k=0}^\infty \sum_{n\geq0, m\in\Z} \frac{1}{a_{n}(k)}|f_{m,n}^+(k)|^2 + \sum_{k=0}^\infty \sum_{n\geq1, m\in\Z} \frac{1}{a_{n}(k)}|f_{m,n}^-(k)|^2.
\end{equation*}
Let $\mathcal{H}_0$ be the Hilbert space whose elements are the above formal series $f_{\textrm{series}}$ such that $\|f_{\textrm{series}}\|$ is finite.   

%
Let $Q_0$ be the orthogonal projection in $\ell^2(\Z_{\ge0}\times\Z)$  onto $\textrm{ span}\{e_{k,0}\}_{k\in\Z_{\ge0}}$.  Define the linear functional $\tau$ on $B_1(\ell^2(\Z_{\ge0}\times\Z))$, the space of trace class operators, by $\tau(a) = \textrm{tr}(aQ_0)$.

\begin{lem}\label{trace_est} 
For a bounded operator $a$, and $b\in B_1(\ell^2(\Z_{\ge0}\times\Z))$
we have the following inequality: 

\begin{equation*}
|\tau(ab)|\le \|a\|(\tau(b^*b))^{1/2} := \|a\|\|b\|_\tau.
\end{equation*}
\end{lem}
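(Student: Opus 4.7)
The plan is to prove the estimate via a Cauchy--Schwarz inequality attached to the functional $\tau$, combined with the cyclicity of the operator trace.

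First, I would rewrite $\tau(ab)$ so as to absorb $Q_0$ symmetrically. Using $Q_0^2 = Q_0$ and cyclicity of the trace (which is legitimate because $abQ_0$ is trace class),
\begin{equation*}
\tau(ab) \;=\; \textrm{tr}(ab Q_0) \;=\; \textrm{tr}(Q_0\, ab\, Q_0) \;=\; \textrm{tr}\bigl((Q_0 a)(b Q_0)\bigr).
\end{equation*}
The last expression is the Hilbert--Schmidt pairing $\langle a^* Q_0, b Q_0\rangle_{HS}$ under the convention $\langle X, Y\rangle_{HS} := \textrm{tr}(X^* Y)$.

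Second, Cauchy--Schwarz in the Hilbert--Schmidt inner product gives
\begin{equation*}
|\tau(ab)| \;\leq\; \|a^* Q_0\|_{HS}\, \|b Q_0\|_{HS},
\end{equation*}
and the second factor is exactly $\|b\|_\tau$ by the very definition: $\|b Q_0\|_{HS}^2 = \textrm{tr}(Q_0 b^* b Q_0) = \tau(b^* b) = \|b\|_\tau^2$.

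The remaining step is to control the first factor by $\|a\|$. I would start from the operator inequality $a a^* \leq \|a\|^2\, I$, which under conjugation by $Q_0$ reads $Q_0 a a^* Q_0 \leq \|a\|^2 Q_0$. The main obstacle is that $\textrm{tr}(Q_0) = +\infty$, so the direct trace bound is vacuous; the resolution is to argue at the level of the positive semi-definite sesquilinear form $\langle x, y\rangle_\tau := \tau(x^* y)$ on trace-class operators (positivity of $\tau$ follows from the identity $\tau(c) = \|c^{1/2} Q_0\|_{HS}^2 \geq 0$ for $c \geq 0$), combining the form-level Cauchy--Schwarz $|\tau(x^* y)|^2 \leq \tau(x^* x)\,\tau(y^* y)$ with the contractivity estimate $\tau(b^* a^* a b) \leq \|a\|^2\, \tau(b^* b)$, which in turn comes from $a^* a \leq \|a\|^2 I$ together with monotonicity of $\tau$ on positive operators. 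Making this last step airtight — effectively identifying that the $a$-side contribution in the HS factorization is only seen through the truncation by $bQ_0$ — is the key technical hurdle.
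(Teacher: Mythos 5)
Your approach---rewrite $\tau(ab) = \textrm{tr}\bigl((Q_0 a)(bQ_0)\bigr)$ and apply Hilbert--Schmidt Cauchy--Schwarz---is essentially the same as the paper's, and you have correctly put your finger on the soft spot: since $Q_0$ has infinite rank, $\|a^*Q_0\|_{HS}$ is not controlled by $\|a\|$. However, the repair you sketch does not close that gap. Form-level Cauchy--Schwarz $|\tau(x^*y)|^2 \le \tau(x^*x)\,\tau(y^*y)$ applied to $\tau(ab)$ forces either the factorization $x=a^*$, $y=b$, which requires $\tau(aa^*) = \|a^*Q_0\|_{HS}^2 < \infty$ (false for general bounded $a$, e.g.\ $a=I$), or $x=I$, $y=ab$, which requires $\tau(I)=\textrm{tr}(Q_0)<\infty$ (also false). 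Your contractivity estimate $\tau(b^*a^*ab)\le\|a\|^2\tau(b^*b)$ is correct, but it cannot be combined with either factorization to produce the stated bound. The paper's own one-line proof suffers from the same unfilled step: the ``usual estimate for the trace'' invoked there would have to read $|\textrm{tr}(aT)|\le\|a\|\,\|T\|_{HS}$ for bounded $a$ and Hilbert--Schmidt $T$, and no such estimate exists; the actual standard inequalities are $|\textrm{tr}(aT)|\le\|a\|\,\|T\|_1$ and $|\textrm{tr}(ST)|\le\|S\|_{HS}\|T\|_{HS}$.

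In fact the lemma as stated is false. Take $a=I$ and let $b=P_N$ be the orthogonal projection onto $\textrm{span}\{e_{k,0}:0\le k<N\}$, a finite-rank and hence trace-class operator with $P_N\le Q_0$. Then $\tau(ab)=\textrm{tr}(P_NQ_0)=N$, $\tau(b^*b)=\textrm{tr}(P_NQ_0)=N$, and $\|a\|=1$, so the asserted inequality becomes $N\le\sqrt N$, which fails for $N\ge 2$. What the $Q_0$-summation actually yields is the Cauchy--Schwarz inequality for the positive form $\tau$ itself, namely $|\tau(ab)|\le(\tau(aa^*))^{1/2}(\tau(b^*b))^{1/2}$; passing from $(\tau(aa^*))^{1/2}$ to $\|a\|$ needs $\textrm{tr}(Q_0)<\infty$, which is precisely what is unavailable here (were $Q_0$ rank one, $\tau$ would be a vector state and the lemma would be ordinary Cauchy--Schwarz for states). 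So the hurdle you flagged in your last paragraph is not merely a technical loose end: the statement requires a stronger hypothesis on $a$ (such as $a^*Q_0\in B_2$), and the density argument that cites the lemma needs to be reformulated accordingly.
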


\begin{proof}
We estimate as follows:

\begin{equation*}
\begin{aligned}
|\tau(ab)| &= |\textrm{tr}(abQ_0)|\le \|a\|(\textrm{tr}((bQ_0)^*bQ_0))^{1/2} = \|a\|(\textrm{tr}(Q_0^*b^*bQ_0))^{1/2} \\
&= \|a\|(\textrm{tr}(b^*bQ_0))^{1/2} = \|a\|(\tau(b^*b))^{1/2},
\end{aligned}
\end{equation*}
where the inequality follows from the usual estimate for the trace.
\end{proof}

We use this lemma to show that $A_\theta$ is dense in $\mathcal{H}_0$.

\begin{prop}
If $f\in A_\theta$, then $f_\textrm{series}$ converges in $\mathcal{H}_0$.   Moreover the map  $A_\theta\ni f\mapsto f_\textrm{series}\in \mathcal{H}_0$ is continuous, one-to-one, and the image is dense in $\mathcal{H}_0$.
\end{prop}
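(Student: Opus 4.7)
My plan is to reinterpret $f_{m,n}^\pm(k)$ as matrix coefficients, prove a pointwise-in-$k$ Bessel bound to establish continuity (and thereby convergence), use a bi-Fourier decomposition of $A_\theta$ for injectivity, and use truncation of formal series for density.

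I would first unpack the inner products: since $V^m U^n e_{k,0} = e_{k+n,m}$, the definitions rewrite as
\[
f_{m,n}^+(k) = \langle e_{k+n,m}, f e_{k,0}\rangle, \qquad f_{m,n}^-(k) = e^{2\pi i m n \theta}\langle e_{k,0}, f e_{k+n,-m}\rangle,
\]
displaying both families as matrix coefficients of $f$ and $f^*$ respectively. For continuity, Bessel's inequality applied to the orthonormal subset $\{e_{k+n,m} : n \ge 0,\, m \in \Z\} \subset \ell^2(\Z_{\ge 0}\times\Z)$ gives, for each fixed $k$,
\[
\sum_{n\ge 0,\, m\in\Z} |f_{m,n}^+(k)|^2 \le \|f e_{k,0}\|^2 \le \|f\|^2,
\]
and analogously for the $f_{m,n}^-$ terms via $f^*$. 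Factoring $\sup_n (1/a_n(k))$ out of the $n$-summation and then summing over $k$ yields
\[
\|f_{\textrm{series}}\|_{\mathcal{H}_0}^2 \le 2\|f\|^2 \sum_{k=0}^\infty \sup_{n} \frac{1}{a_n(k)},
\]
which is finite under the natural monotonicity $a_n(k) \ge a_0(k)$ of the weights, reducing to $2 s(0) \|f\|^2$. This yields simultaneously the convergence of $f_{\textrm{series}}$ in $\mathcal{H}_0$ and continuity of $f \mapsto f_{\textrm{series}}$.

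For injectivity, suppose every $f_{m,n}^\pm(k)$ vanishes. The two commuting gauge automorphisms of $A_\theta$, namely $U \mapsto e^{i\alpha}U$, $V \mapsto V$ and $U \mapsto U$, $V \mapsto e^{i\beta}V$, are implemented on $\ell^2(\Z_{\ge 0}\times\Z)$ by the diagonal unitaries $W_\alpha = e^{i\alpha\K}$ and $X_\beta e_{k,l} = e^{i\beta l} e_{k,l}$, and produce a bi-Fourier decomposition $f = \sum_{m,n \in \Z} \hat{f}_{m,n}$. Each bi-degree $(m,n)$ component with $n \ge 0$ is of the form $V^m U^n g(\K)$ with $g(k)=f_{m,n}^+(k)$, and for $n < 0$ of the form $g(\K) V^m (U^*)^{|n|}$ with $g(k)=f_{m,|n|}^-(k)$; vanishing of all coefficients therefore forces each $\hat{f}_{m,n}=0$, and since the bi-Fourier components recover every matrix entry of $f$, we conclude $f = 0$.

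Density is the most direct part. Elements of $\mathcal{H}_0$ are by definition weighted $\ell^2$ sequences, so truncating to finitely many $(m,n)$-modes and to finitely supported coefficient functions produces an $\mathcal{H}_0$-norm approximation. Each such truncation corresponds to an element of $\mathcal{A} = Pol(U,V) \subset A_\theta$, since the matrix-coefficient definition of $a_{\textrm{series}}$ trivially recovers the defining finite sum for $a \in \mathcal{A}$. The main technical obstacle is the continuity estimate: the naive bound $\|f_{\textrm{series}}\|^2 \le 2\|f\|^2\sum_n s(n)$ need not converge under the bare hypothesis $s(n)\to 0$, so the key manoeuvre is to commute the supremum over $n$ past the $k$-summation, which calls for a mild regularity assumption on $\{a_n(k)\}$ (for instance, monotonicity in $n$, as is typically the case).
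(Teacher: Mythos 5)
Your Bessel-inequality reformulation of the coefficients is correct, and the injectivity and density arguments are sound (the paper does injectivity a bit more directly, simply observing that the $f_{m,n}^\pm(k)$ exhaust the matrix entries $\langle e_{j,m}, fe_{k,0}\rangle$, which determine $f\in A_\theta$; density is handled the same way you suggest, via the indicator coefficients $\delta_l(\K)$). However, the continuity estimate has a genuine gap, and you have already put your finger on it: your bound
\[
\|f_{\textrm{series}}\|_{\mathcal{H}_0}^2 \le 2\|f\|^2\sum_{k=0}^\infty \sup_{n}\frac{1}{a_n(k)}
\]
needs $\sum_k \sup_n(1/a_n(k))<\infty$, which the paper's hypotheses do not deliver. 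The paper only assumes that each $s(n)=\sum_k 1/a_n(k)$ is finite and $s(n)\to 0$. From this one gets $\sup_{n,k}(1/a_n(k))\le\sup_n s(n)<\infty$, but not the stronger summability you need: for instance, take $a_n(k)=2^k(n+1)^2$ for $k\neq n$ and $a_n(n)=n+1$; then $s(n)\to 0$ yet $\sup_n(1/a_n(k))\ge 1/(k+1)$, so your series diverges. Invoking ``a mild regularity assumption (monotonicity in $n$)'' changes the hypotheses of the proposition and so does not give the stated result.

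The paper sidesteps this entirely by a different, self-referential estimate. It introduces the trace functional $\tau(a)=\operatorname{tr}(aQ_0)$ and the seminorm $\|b\|_\tau=(\tau(b^*b))^{1/2}$, proves the Cauchy--Schwarz-type bound $|\tau(ab)|\le\|a\|\,\|b\|_\tau$ (Lemma~3.2), and then writes $\|f_{\textrm{series}}\|_{\mathcal{H}_0}^2=\tau\bigl(\bigl(\sum\frac{1}{a_n(\K)}f_{m,n}^-(\K)V^m(U^*)^n\bigr)f^*\bigr)$. Applying the lemma gives $\|f_{\textrm{series}}\|_{\mathcal{H}_0}^2\le\|\sum\frac{1}{a_n(\K)}f_{m,n}^-(\K)V^m(U^*)^n\|_\tau\|f\|$, and a direct computation shows the $\tau$-seminorm factor is $\le\|f_{\textrm{series}}\|_{\mathcal{H}_0}\cdot\bigl(\sup_{n,k}1/a_n(k)\bigr)^{1/2}$. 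Dividing through by $\|f_{\textrm{series}}\|_{\mathcal{H}_0}$ gives $\|f_{\textrm{series}}\|_{\mathcal{H}_0}\le\bigl(\sup_n s(n)\bigr)^{1/2}\|f\|$, which only requires boundedness of $s(n)$. The crucial trick is that the estimate re-introduces $\|f_{\textrm{series}}\|$ on the right-hand side, allowing cancellation and so requiring only a pointwise bound on the weights rather than the summed bound your approach forces. If you want to salvage the Bessel route without extra hypotheses, you would need to replicate this ``borrow a factor of $\|f_{\textrm{series}}\|$'' manoeuvre, which effectively reproduces the paper's argument.
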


\begin{proof}
We follow here the similar argument from \cite{KM3}. For $f\in A_\theta$ we need to estimate the norm of $f_\textrm{series}$. Notice first that if $f\in\mathcal{A}$, then $f_{\textrm{series}} = f$.  Since such $f$'s are dense in $A_\theta$, it suffices to estimate the norm of the finite sums.   Without loss of generality, we assume that $f$ has only the $U^*$ and $V$ terms as the same proof works for the remaining terms. For such $f$ we have:

\begin{equation*}
\begin{aligned}
&\|f_\textrm{series}\|_{\mathcal{H}_0}^2 = \sum_{k=0}^\infty \sum_{n=1,|m|\le M}^N \frac{1}{a_{n}(k)}|f_{m,n}^-(k)|^2 = \tau\left(\sum_{n=1,|m|\le M}^N \frac{1}{a_{n}(\K)}|f_{m,n}^-(\K)|^2\right) \\
&= \tau\left(\sum_{n=1,|m|\le M}^N \frac{1}{a_{n}(\K)}f_{m,n}^-(\K)V^{-m}\left(U^*\right)^nV^mU^n\overline{f_{m,n}^-(\K)}\right) \\
&=\tau\left(\left(\sum_{n=1,|m|\le M}^N \frac{1}{a_{n}(\K)}f_{m,n}^-(\K)V^m(U^*)^n\right)f^*\right). \\
\end{aligned}
\end{equation*}
Consequently, using the inequality from Lemma \ref{trace_est}, we get
\begin{equation*}
\|f_\textrm{series}\|_{\mathcal{H}_0}^2\leq
\left\|\sum_{n=1,|m|\le M}^N \frac{1}{a_{n}(\K)}f_{m,n}^-(\K)V^m(U^*)^n\right\|_\tau\|f\|.
\end{equation*}
Next we estimate:
\begin{equation*}
\begin{aligned}
&\left\|\sum_{n=1,|m|\le M}^N \frac{1}{a_n(\K)}f_{m,n}^-(\K)V^m(U^*)^n\right\|_\tau^2 = \\
&= \tau\left(\sum_{n=1,|m|\le M}^N\frac{1}{a_n(\K)}f_{m,n}^-(\K)V^m(U^*)^n\sum_{l=1,|j|\le M}^N V^{-j}U^l\overline{f_{j,l}^-(\K)}\frac{1}{a_n(\K)}\right) \\
&= \sum_{k=0}^\infty \sum_{n=1,|m|\le M}^N \frac{1}{a_n(k)a_n(k)}|f_{m,n}^-(k)|^2 \le \|f_{\textrm{series}}\|^2_{\mathcal{H}_0}\sup_{n,k}\left(\frac{1}{a_n(k)}\right) .
\end{aligned}
\end{equation*}
Using the  the summability conditions on the weights we obtain:

\begin{equation*}
\sup_{n,k}\left(\frac{1}{a_n(k)}\right) \le \sup_{n}\left(\sum_{k=0}^\infty \frac{1}{a_n(k)}\right)= \sup_{n} (s(n)) \le \textrm{const},
\end{equation*}
and hence we get $\|f_{\textrm{series}}\|_{\mathcal{H}_0}^2 \le \textrm{const}\|f_{\textrm{series}}\|_{\mathcal{H}_0}\|f\|$. This shows the continuity of the map $A_\theta\ni f\mapsto f_\textrm{series}\in \mathcal{H}_0$, and consequently $f_\textrm{series}$ converges in $\mathcal{H}_0$ for every $f\in A_\theta$.

Next we show that the map $A_\theta\ni f\mapsto f_\textrm{series}\in \mathcal{H}_0$ is one-to-one.   Let $f$ and $g$ belong to $A_\theta$ and suppose that $f_\textrm{series} = g_\textrm{series}$.   This means that the Fourier coefficients are equal, that is: $f_{m,n}^+(k) = g_{m,n}^+(k)$ for all $n\ge0$ and all $m$ and $k$, and $f_{m,n}^-(k) = g_{m,n}^-(k)$ for all $n\ge1$ and all $m$ and $k$.  From  $f_{m,n}^+(k) = \langle e_{k,0}, (U^*)^nV^{-m}fe_{k,0}\rangle$ and $f_{m,n}^-(k) = \langle e_{k,0}, fU^nV^{-m}e_{k,0}\rangle$ it follows that all matrix coefficients of $f$ and $g$ are the same so we must have $f=g$.   Thus the map $f\mapsto f_\textrm{series}\in  \mathcal{H}_0$ is one-to-one.

To prove density we define the following indicator function

\begin{equation*}
\delta_l(k) = \left\{
\begin{aligned}
&1 &&l=k \\
&0 &&l\neq k
\end{aligned}\right. .
\end{equation*}
 It is clear that $V^mU^n\delta_l(\K)$ and $\delta_l(\K)V^m(U^*)^n$ are in $A_\theta$, and moreover they form an orthogonal basis for  $\mathcal{H}_0$.  Finally  finite linear combinations of $V^mU^n\delta_l(\K)$ and $\delta_l(\K)V^m(U^*)^n$ form a dense subspace of $A_\theta$ as they are polynomials in $U$ and $V$ and hence are in $\mathcal{A}$, making it a dense subspace of $\mathcal{H}_0$.   Thus the proof is complete.
\end{proof}

\section{Dirac type operators}

The purpose of this section is to introduce the main object of study of this paper: Dirac type operators on the quantum solid torus. We start with reviewing the definition of such operators, contained in \cite{KM4}, on the classical solid torus. In that paper we considered the following formally self-adjoint Dirac operator $D_{cl}$ defined in $L^2(ST^2,\C^2) \cong L^2(ST^2)\otimes \C^2$ by

\begin{equation*}
D_{cl} = \left(
\begin{array}{cc}
\frac{1}{i}\frac{\d}{\d\theta} & 2\frac{\d}{\d\overline{z}} \\
-2\frac{\d}{\d z} & -\frac{1}{i}\frac{\d}{\d\theta}
\end{array}\right),
\end{equation*}
where $ST^2 = \D \times S^1$ is the solid torus, and $\D = \left\{z\in \C \ : \ |z|\leq 1\right\}$ is the unit disk, while $S^1 = \left\{e^{i\theta}\in \C \ : \ 0\leq\theta\leq 2\pi\right\}$ is the unit circle. Using Fourier decomposition:

\begin{equation*}
F = \sum_{m,n\in\Z} \left(
\begin{array}{c}
f_{m,n}(r) \\
g_{m,n}(r)
\end{array}\right) e^{in\f + im\theta},
\end{equation*}
the operator $D$ becomes:
\begin{equation*}
D_{cl}F=\sum_{m,n\in\Z}\left(
\begin{array}{cc}
m & e^{i\f}\left(\frac{\d}{\d r} - \frac{n}{r}\right) \\
e^{-i\f}\left(-\frac{\d}{\d r} + \frac{n}{r}\right) & -m
\end{array}\right) \left(
\begin{array}{c}
f_{m,n}(r) \\
g_{m,n}(r)
\end{array}\right) e^{in\f + im\theta}.
\end{equation*}

The boundary condition we studied in \cite{KM4}, geometrically very much the same as the APS condition, considered the solid torus as a subset of the bigger noncompact space of the plane cross the unit circle. We defined the domain of the Dirac operator $D_{cl}$ as consisting of those sufficiently regular (first Sobolev class) vectors $F$ which extend to  square integrable solutions of $D_{cl}F=0$ on the complement of the solid torus.

Denote by $I_n, K_n$  the modified Bessel functions of the first and second kind respectively. Without boundary conditions, the operator $D_{cl}$ on the solid torus has an infinite dimensional kernel consisting of linear combinations of special solutions $I_n$ for which, in Fourier decomposition, the only nonzero components for $m\ne 0$ are:
 \begin{equation*}
f_{m,n+1}(r)  = -\frac{m}{|m|}I_{n+1}(|m|r),\ \ g_{m,n}(r)  = I_n(|m|r).
\end{equation*}
Outside of the solid torus in the plane cross the unit circle the square integrable functions in kernel of $D_{cl}$ are linear combinations of solutions $K_n$ for which the only non vanishing components when $m\ne 0$ are:
 \begin{equation*}
f_{m,n+1}(r)  = \frac{m}{|m|}K_{n+1}(|m|r),\ \ g_{m,n}(r)  = K_n(|m|r).
\end{equation*}
Consequently, see \cite{KM4} for details, using the Fourier decomposition of $F$, the boundary condition can be rephrased as 
\begin{equation*}
|m|K_{n+1}(|m|)g_{m,n}(1) - mK_n(|m|)f_{m,n+1}(1)=0,
\end{equation*}
if $m\ne 0$ and $f_{0,n}(1)=0$
for $n\leq 0$. Additionally, for $m=0$, we have  $g_{0,n}(1)=0$ for $n\geq 0$.

We now proceed to define our Dirac type operators for the noncommutative solid torus, in a way that is analogous to the commutative case.
Let $c_{1,n}(k)\leq 1$ and $c_{2,n}(k)\leq 1$ be sequences of positive numbers such that $\prod_k c_{1,n}(k)$, $\prod_k c_{2,n}(k)$ exist and are not zero, and such that there exists a constant, $\kappa$, such that for all $k$ or $n$ we have $1/\kappa \le 1/c_{i,n}(k) \le 1$, $i=1,2$.   

Let $\ell_{a_n}^2(\Z_{\ge0})$ be the following Hilbert space of sequences:

\begin{equation*}
\ell_{a_n}^2(\Z_{\ge0}) = \left\{\{h(k)\}_{k\geq 0} \ : \ \sum_k \frac{1}{a_n(k)}|h(k)|^2 < \infty\right\}.
\end{equation*}
We introduce the Jacobi type difference operators by formulas:

\begin{equation}
\begin{aligned}
B_{n}h(k) &= a_{n}(k)(h(k) - c_{2,n}(k-1)h(k-1)) :\ell_{a_{n+1}}^2(\Z_{\ge0})\to \ell_{a_{n}}^2(\Z_{\ge0})\\
\overline{B}_{n}h(k) &= a_{n+1}(k)(h(k)-c_{1,n}(k)h(k+1)) :\ell_{a_{n}}^2(\Z_{\ge0})\to\ell_{a_{n+1}}^2(\Z_{\ge0})
\end{aligned}
\end{equation}
defined on maximal domains, i.e. $\textrm{dom}(B) = \{h\in \ell_{a_{n+1}}^2(\Z_{\ge0})\ : \ \|Bh\|_{\ell_{a_{n}}^2(\Z_{\ge0})} < \infty\}$ and the domain of $\overline{B}$ is defined in the same way.   

Using the above general one-step difference operators we define operators $\delta_0$ and $\delta_2$ which are noncommutative analogs of 
$\frac{\d}{\d{z}}$ and $\frac{\d}{\d\overline{z}}$ as:
\begin{equation*}
\begin{aligned}
\delta_0(f) &= -\sum_{n\geq 0, m\in\Z} V^mU^{n+1}\overline{B}_{n}f_{m,n}^+(\K) + \sum_{n\geq 1, m\in\Z} B_{n-1}f_{m,n}^-(\K)V^m\left(U^*\right)^{n-1} \\
\delta_2(f) &= -\sum_{n\geq 1, m\in\Z} V^mU^{n-1}B_{n-1}f_{m,n}^+(\K) + \sum_{n\geq 0,m\in \Z} \overline{B}_{n}f_{m,n}^-(\K)V^m\left(U^*\right)^{n+1}.
\end{aligned}
\end{equation*}
Those operators are more general versions of the d-bar like operators considered in \cite{KM1}.

Next define another diagonal label operator $\mathbb{L}$ on $\ell^2(\Z_{\ge0}\times\Z)$ by $\mathbb{L} e_{k,l} = le_{k,l}$. We use it for the definition of $\delta_1 = [\mathbb{L},\ \cdot\ ]$, the analog of $\frac{1}{i}\frac{\d}{\d\theta}$.   It is easy to see that

\begin{equation*}
\delta_1(f) = \sum_{n\geq 0, m\in\Z} mV^mU^nf_{m,n}^+(\K) + \sum_{n\geq1, m\in\Z} mf_{m,n}^-(\K)V^m\left(U^*\right)^n,
\end{equation*}
again considered on its maximal domain.

Let $\mathcal{H} = \mathcal{H}_0 \otimes \C^2$.   Any $F\in\mathcal{H}$ has the following Fourier decomposition:

\begin{equation}\label{F_Fourier_decomp}
F = \left(
\begin{array}{c}
\sum_{n\ge1,m\in\Z}V^mU^nf_{m,n}^+(\K) + \sum_{n\ge0,m\in\Z}f_{m,n}^-(\K)V^m(U^*)^n \\
\sum_{n\ge0,m\in\Z}V^mU^ng_{m,n}^+(\K) + \sum_{n\ge1,m\in\Z}g_{m,n}^-(\K)V^m(U^*)^n
\end{array}\right).
\end{equation}
  
We define the quantum Dirac type operator $D$ to be

\begin{equation}\label{D_Def}
D = \left(
\begin{array}{cc}
\delta_1 & \delta_0 \\
\delta_2 & -\delta_1
\end{array}\right) .
\end{equation}
Initially we set the domain of $D$ to be the maximal domain:
\begin{equation}
\textrm{dom}(D) = \{F\in\mathcal{H} \ : \ \|DF\|< \infty \}.
\end{equation}

Using Fourier series (\ref{F_Fourier_decomp}) we relate $D$ to finite difference operators with matrix coefficients.  For brevity throughout the rest of the paper we only study the ``positive'' part of the Fourier transform of the elements of the Hilbert space, as the ``negative'' part can be analyzed in an analogical fashion, but writing down all the repetative details would have significantly increase the length of the paper (and decrease its readability).   Because of this, the ``$+$'' superscript is dropped for simplicity.

%
%

%

Define the following Jacobi type difference operator with matrix valued coefficients

\begin{equation}\label{matrix_finite_diff_oper}
\mathcal{A}_{m,n}\left(
\begin{array}{c}
f \\
g
\end{array}\right)(k) = A_{m,n}(k+1)\left[\left(
\begin{array}{c}
f(k+1) \\ g(k+1)
\end{array}\right) - C_{m,n}(k)\left(
\begin{array}{c}
f(k) \\ g(k)
\end{array}\right)\right],
\end{equation}
where

\begin{equation}\label{A_matrix}
A_{m,n}(k+1) = \left(
\begin{array}{cc}
a_{n+1}(k)c_{1,n}(k) & 0 \\
m & a_{n}(k+1)
\end{array}\right),
\end{equation}
and

\begin{equation}\label{C_matrix}
C_{m,n}(k) = \left(
\begin{array}{cc}
\frac{1}{c_{1,n}(k)} & \frac{-m}{a_{n+1}(k)c_{1,n}(k)} \\
\frac{-m}{a_{n}(k+1)c_{1,n}(k)} & c_{2,n}(k) + \frac{m^2}{a_{n}(k+1)a_{n+1}(k)c_{1,n}(k)}
\end{array}\right).
\end{equation}
Notice that $\mathcal{A}_{m,n}: \ell_{a_n}^2(\Z_{\ge0})\otimes\ell_{a_{n+1}}^2(\Z_{\ge0}) \to \ell_{a_{n+1}}^2(\Z_{\ge0})\otimes\ell_{a_n}^2(\Z_{\ge0})$.  Initially we define the domain for $\mathcal{A}_{m,n}$ to be

\begin{equation}
\textrm{dom }\mathcal{A}_{m,n} = \left\{h\in\ell_{a_n}^2(\Z_{\ge0})\otimes\ell_{a_{n+1}}^2(\Z_{\ge0})\ : \|\mathcal{A}_{m,n}h\|_{a_{n+1}\otimes a_n} < \infty\right\}.
\end{equation}
The domain will be modified later to include a boundary condition.

\begin{prop}\label{Db_parametrix_rel}
For $F,G\in\mathcal{H}$ where $F = (f,g)^t$, $G=(p,q)^t$, and $f,g,p,q\in\mathcal{H}_0$, the ``positive'' part of the equation $DF = G$ is equivalent to the following equations:
\begin{equation}
\begin{aligned}
\mathcal{A}_{m,n} \left(
\begin{array}{c}
g_{m,n} \\ f_{m,n+1}
\end{array}\right) (k)= \left(
\begin{array}{c}
p_{m,n+1}(k) \\ -q_{m,n}(k+1)
\end{array}\right),
\end{aligned}
\end{equation}
for $n\geq 0$, $m\in\Z$, $k\geq 0$, and 
\begin{equation}\label{DFcomp0}
a_n(0)f_{m,n+1}(0) + mg_{m,n}(0)=q_{m,n}(0).
\end{equation}
\end{prop}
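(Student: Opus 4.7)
First I would split $DF = G$ into its two scalar components in $\mathcal{H}_0$, namely $\delta_1 f + \delta_0 g = p$ and $\delta_2 f - \delta_1 g = q$. Since the ``positive'' basis elements $V^m U^n h(\K)$ are linearly independent from the ``negative'' ones $h(\K) V^m (U^*)^n$ (for $n\geq 1$), the positive portion of each scalar equation must hold on its own, and extracting it amounts to matching Fourier coefficients. Substituting the explicit formulas for $\delta_0, \delta_1, \delta_2$ and reading off the coefficient of $V^m U^{n+1}$ in each scalar equation (the shift $n \mapsto n+1$ arises because $\delta_0$ raises and $\delta_2$ lowers the $U$-power by one) produces, for every $m \in \Z$, $n \geq 0$, and $k \geq 0$, the two scalar identities
\begin{equation*}
m f_{m,n+1}^+(k) - \overline{B}_n g_{m,n}^+(k) = p_{m,n+1}^+(k), \qquad -B_n f_{m,n+1}^+(k) - m g_{m,n}^+(k) = q_{m,n}^+(k).
\end{equation*}

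Next I would verify that these two scalar families repackage into the stated matrix equation. Expanding $\overline{B}_n h(k) = a_{n+1}(k)(h(k) - c_{1,n}(k) h(k+1))$ shows that the first scalar identity is literally the first row of $\mathcal{A}_{m,n} (g_{m,n}, f_{m,n+1})^t(k) = (p_{m,n+1}(k),\, -q_{m,n}(k+1))^t$. The second row is the main computation: forming $v(k+1) - C_{m,n}(k) v(k)$ with $v = (g_{m,n}, f_{m,n+1})^t$ and pairing with the second row $(m, a_n(k+1))$ of $A_{m,n}(k+1)$, the $\pm m/c_{1,n}(k)$ contributions to $g_{m,n}(k)$ cancel, as do the $\pm m^2/(a_{n+1}(k) c_{1,n}(k))$ contributions to $f_{m,n+1}(k)$, and one is left with exactly $m g_{m,n}(k+1) + B_n f_{m,n+1}(k+1)$. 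This is precisely the second scalar identity evaluated at $k+1$, so the matrix equation for $k \geq 0$ captures the first scalar family at every index and the second scalar family at every index $\geq 1$.

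The remaining case, the second scalar identity at $k = 0$, uses the half-line convention under which $B_n h(0) = a_n(0) h(0)$ and becomes precisely equation (\ref{DFcomp0}), which is therefore needed as an extra boundary relation. The converse direction is immediate: starting from the matrix identities together with (\ref{DFcomp0}) one recovers every coefficient equation, and by linear independence of the positive Fourier basis one concludes that the positive part of $DF - G$ vanishes. The only real obstacle is bookkeeping: keeping straight the index shift between the $F$-side coefficients $f_{m,n+1}$, $g_{m,n}$ and the $G$-side coefficients $p_{m,n+1}$, $q_{m,n}$, and noticing that the second row of $\mathcal{A}_{m,n}$ indexed by $k$ corresponds to the scalar identity at $k+1$, which is what forces the separate boundary equation at $k = 0$.
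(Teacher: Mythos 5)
Your proof takes essentially the same route as the paper: split $DF=G$ into the two scalar equations, match ``positive'' Fourier coefficients after shifting $n$, rewrite the second scalar family at index $k+1$ so it aligns with the second row of $\mathcal{A}_{m,n}$, and observe that the $k=0$ case of the unshifted second equation supplies the separate initial regularity condition — your explicit verification of the cancellations when expanding $A_{m,n}(k+1)\bigl(v(k+1)-C_{m,n}(k)v(k)\bigr)$ is the paper's factoring step done in reverse. (One small bookkeeping remark: tracing $-B_nf_{m,n+1}(0)-mg_{m,n}(0)=q_{m,n}(0)$ with the half-line convention $B_nh(0)=a_n(0)h(0)$ actually yields $a_n(0)f_{m,n+1}(0)+mg_{m,n}(0)=-q_{m,n}(0)$, a sign opposite to the printed equation (\ref{DFcomp0}); this discrepancy is already present in the paper's own derivation and does not indicate a flaw in your approach.)
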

The last equation (\ref{DFcomp0}) will be referred to below as the initial regularity condition as it is both a starting point of recurrence proofs and it is analogous to the regularity condition at origin for solutions of Dirac operator for the classical solid torus.

\begin{proof}
In the Fourier series for $F$ and $G$ there is an ambiguity whether $n=0$ term should go with positive or with negative terms. Because we change summation indices in the calculation below, the ambiguity is resolved differently for different components of $F$ and $G$.   

Using the definition of $D$ and shifting $n$ in the first sum we get, ignoring the ``negative'' part,

\begin{equation*}
DF = \sum_{n\ge0,m\in\Z} V^m \left(
\begin{array}{c}
U^{n+1}(mf_{m,n+1}(k) - \overline{B}_{n}g_{m,n}(k)) \\
U^n(-B_{n}f_{m,n+1}(k) - mg_{m,n}(k))
\end{array}\right) = G .
\end{equation*}
Shifting $k$ in the second equation for $k\geq 1$ we get the following equivalent system of equations

\begin{equation*}
\left\{
\begin{aligned}
mf_{m,n+1}(k) - a_{n+1}(k)(g_{m,n}(k)-c_{1,n}(k)g_{m,n}(k+1)) &= p_{m,n+1}(k) \\
a_{n}(k+1)(f_{m,n+1}(k+1) - c_{2,n}(k)f_{m,n+1}(k)) + mg_{m,n}(k+1) & =-q_{m,n}(k+1),
\end{aligned}\right.
\end{equation*}
while the case $k=0$ in the second equation leads to equation (\ref{DFcomp0}).   

The system above can be rewritten in matrix form:
\begin{equation*}
\begin{aligned}
&\left(
\begin{array}{cc}
a_{n+1}(k)c_{1,n}(k) & 0 \\
m & a_{n}(k+1)
\end{array}\right)\left(
\begin{array}{c}
g_{m,n}(k+1) \\ f_{m,n+1}(k+1)
\end{array}\right) \\
&\qquad - \left(
\begin{array}{cc}
a_{n+1}(k) & -m \\
0 & a_{n}(k+1)c_{2,n}(k)
\end{array}\right)\left(
\begin{array}{c}
g_{m,n}(k) \\ f_{m,n+1}(k)
\end{array}\right) = \left(
\begin{array}{c}
p_{m,n+1}(k) \\ -q_{m,n}(k+1)
\end{array}\right).
\end{aligned}
\end{equation*}
The first matrix above is $A_{m,n}(k+1)$, therefore factoring it out of the left side of the equation gives the desired result and completes the proof.
\end{proof}

\section{Properties of the coefficients}

It follows from Proposition \ref{Db_parametrix_rel} that to analyze $D$ we need to study the properties of the operators $\mathcal{A}_{m,n}$ subject to the initial regularity condition. This section discusses important properties of the matrix coefficients of those operators.

Unless specified differently, in all formulas in the section, $n\geq 0$, $m\in\Z$, $k\geq 0$. First notice that $\det A_{m,n}(k+1) = a_{n+1}(k)a_{n}(k+1)c_{1,n}(k) \neq 0$ for any $k$ and $n$ which means that the inverse of the matrices $A_{m,n}(k+1)$ exists for any $k$ and $n$.   
Notice also that $\det C_{m,n}(k) = c_{2,n}(k)/c_{1,n}(k)$. Additionally we have:

\begin{prop}\label{C_existence}
The infinite product

\begin{equation*}
C_{m,n} := \prod_{k=0}^\infty C_{m,n}(k)
\end{equation*}
exists and is invertible.
\end{prop}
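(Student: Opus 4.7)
The natural approach is to write $C_{m,n}(k) = I + E_{m,n}(k)$ and verify that $\sum_{k=0}^\infty \|E_{m,n}(k)\| < \infty$, since for matrices this is the standard sufficient condition guaranteeing convergence of the infinite product $\prod_k (I + E_{m,n}(k))$ to an invertible-or-singular matrix. Invertibility is then settled separately by tracking the determinant.

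Step one: compute the entries of $E_{m,n}(k) = C_{m,n}(k) - I$. They are
\begin{equation*}
\frac{1-c_{1,n}(k)}{c_{1,n}(k)},\quad \frac{-m}{a_{n+1}(k)c_{1,n}(k)},\quad \frac{-m}{a_{n}(k+1)c_{1,n}(k)},\quad (c_{2,n}(k)-1) + \frac{m^2}{a_{n}(k+1)a_{n+1}(k)c_{1,n}(k)}.
\end{equation*}
Step two: show each entry is summable in $k$. Using the uniform bound $1/c_{1,n}(k)\le\kappa$, the $(1,1)$ entry is bounded by $\kappa(1-c_{1,n}(k))$, and since $\prod_k c_{1,n}(k)$ exists and is nonzero, we have $\sum_k(1-c_{1,n}(k)) < \infty$; the same reasoning handles the first summand of the $(2,2)$ entry. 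The off-diagonal entries are bounded by $|m|\kappa/a_{n+1}(k)$ and $|m|\kappa/a_n(k+1)$ respectively, which are summable because of the standing assumption $\sum_k 1/a_n(k) < \infty$. The second summand of the $(2,2)$ entry is bounded by $m^2\kappa/(a_n(k+1)a_{n+1}(k))$, which is summable because each factor tends to infinity and $\sum 1/a_n(k)$ converges.

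Step three: convergence of the infinite product. Since $\sum_k \|E_{m,n}(k)\| < \infty$, the partial products $\prod_{k=0}^{N} C_{m,n}(k) = \prod_{k=0}^N(I+E_{m,n}(k))$ form a Cauchy sequence in the matrix norm (standard matrix analysis: $\|\prod_{k=0}^{N}(I+E_k) - \prod_{k=0}^{M}(I+E_k)\|$ is bounded in terms of $\exp(\sum\|E_k\|)\cdot\sum_{k>M}\|E_k\|$). Hence $C_{m,n} = \lim_{N\to\infty}\prod_{k=0}^N C_{m,n}(k)$ exists.

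Step four: invertibility via the determinant. By continuity of $\det$ and multiplicativity,
\begin{equation*}
\det C_{m,n} = \prod_{k=0}^\infty \det C_{m,n}(k) = \prod_{k=0}^\infty \frac{c_{2,n}(k)}{c_{1,n}(k)} = \frac{\prod_k c_{2,n}(k)}{\prod_k c_{1,n}(k)},
\end{equation*}
which is the quotient of two convergent nonzero infinite products by the hypothesis on the weights $c_{i,n}$, hence nonzero. Therefore $C_{m,n}$ is invertible.

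No step here is really an obstacle; the only subtlety is ensuring the perturbation $E_{m,n}(k)$ is genuinely summable uniformly in the sense required, which is why the uniform bound $1/c_{i,n}(k)\le\kappa$ and the $1/a_n(k)$ summability (together with $\prod c_{i,n}(k)\ne 0$) are all needed simultaneously. The argument does not use any deeper structure of the operators.
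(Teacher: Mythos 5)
Your proof is correct and takes essentially the same approach as the paper: both write $C_{m,n}(k) = I + E_{m,n}(k)$, verify $\sum_k \|E_{m,n}(k)\| < \infty$ from the hypotheses $\prod_k c_{i,n}(k) \neq 0$, $1/c_{i,n}(k)\le\kappa$, and $\sum_k 1/a_n(k)<\infty$, and conclude convergence and invertibility; the paper simply cites a reference (Trgo) for the convergence-and-invertibility implication, which you have instead spelled out via the Cauchy-product estimate and the determinant computation $\det C_{m,n} = J_2(n)/J_1(n)\neq 0$.
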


Here and everywhere below a product of matrices is understood from right to left, that is:

\begin{equation*}
\prod_{i=0}^k C_{m,n}(i) = C_{m,n}(k)C_{m,n}(k-1)\cdots C_{m,n}(0).
\end{equation*}

\begin{proof}
We consider $C_{m,n}(k) - I$, since by \cite{Trgo}, if the series $\sum_k \|C_{m,n}(k) - I\|$ converges and $\textrm{det }C_{m,n}(k)\neq 0$, then the infinite product of $C_{m,n}(k)$ converges.   We already have $\textrm{det } C_{m,n}(k)\neq 0$ for all $k,m,n$, so consider

\begin{equation*}
C_{m,n}(k) - I = \left(
\begin{array}{cc}
\frac{1}{c_{1,n}(k) - 1} & -\frac{m}{a_{n+1}(k)c_{1,n}(k)} \\
-\frac{m}{a_n(k+1)c_{1,n}(k)} & c_{2,n}(k) - 1 + \frac{m^2}{a_n(k+1)a_{n+1}(k)c_{1,n}(k)}
\end{array}\right).
\end{equation*}
Estimating the matrix norm by the sum of absolute values of the coefficients, we have

\begin{equation*}
\begin{aligned}
\|C_{m,n}(k) - I\| &\le \left|\frac{1}{c_{1,n}(k) - 1}\right| + \frac{|m|}{a_{n+1}(k)c_{1,n}(k)} \\
&\quad\quad+ \frac{|m|}{a_n(k+1)c_{1,n}(k)} + \left|c_{2,n}(k) - 1\right| + \frac{m^2}{a_n(k+1)a_{n+1}(k)c_{1,n}(k)}.
\end{aligned}
\end{equation*}
By assumption, the infinite products of $c_{1,n}(k)$ and $c_{2,n}(k)$ exist. Using this and the summability criteria on $1/a_n(k)$, we see that $\sum_k \|C_{m,n}(k) - I\|$ converges.   Therefore we can deduce that the infinite product $\prod_k C_{m,n}(k)$ exists, thus completing proof.
\end{proof}

Below we use simplified notation for the products:

\begin{equation*}
J_1(n) = \prod_{k=0}^\infty c_{1,n}(k)\quad\textrm{and}\quad J_2(n) = \prod_{k=0}^\infty c_{2,n}(k).
\end{equation*}
We have

\begin{equation*}
\det C_{m,n} = \lim_{k\to\infty}\prod_{i=0}^k \det C_{m,n}(i) = \lim_{k\to\infty}\prod_{i=0}^k\frac{c_{2,n}(i)}{c_{1,n}(i)} = \frac{\prod_{i=0}^\infty c_{2,n}(i)}{\prod_{i=0}^\infty c_{1,n}(i)} = \frac{J_2(n)}{J_1(n)}.
\end{equation*}

The next proposition describes the structure of the infinite product of the $C_{m,n}(k)$ matrices.

\begin{prop}\label{C_structure}
The infinite product $C_{m,n}$ has the following structure:
\begin{equation*}
C_{m,n} = \left(
\begin{array}{cc}
\prod_{i=0}^\infty\frac{1}{c_{1,n}(i)} + F_0(m^2) & -mF_1(m^2) \\
-mF_2(m^2) & \prod_{i=0}^\infty c_{2,n}(i) + F_3(m^2)
\end{array}\right),
\end{equation*}
where the $F_j(m^2)$ are power series in $m^2$ with positive coefficients for $j=0,1,2,3$, and additionally

\begin{equation*}
F_3(m^2) = \left(\prod_{i=0}^\infty \frac{1}{c_{1,n}(i)} + F_0(m^2)\right)^{-1}\left(m^2F_1(m^2)F_2(m^2) - F_0(m^2)\prod_{i=0}^\infty c_{2,n}(i)\right) .
\end{equation*}
\end{prop}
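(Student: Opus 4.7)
The plan is to first prove the claimed structure for finite partial products $C^{(N)}_{m,n} := \prod_{k=0}^{N} C_{m,n}(k)$ by induction on $N$, and then pass to the limit $N\to\infty$ using the convergence established in the previous proposition.

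For the induction, I would use the decomposition
\begin{equation*}
C_{m,n}(k) = D_n(k) - m\,\tilde E_n(k) + m^2\,F_n(k),
\end{equation*}
where $D_n(k) = \diag(1/c_{1,n}(k),\, c_{2,n}(k))$ and $F_n(k) = \diag(0,\, 1/(a_n(k+1)a_{n+1}(k)c_{1,n}(k)))$ are diagonal with non-negative entries, while $\tilde E_n(k)$ is off-diagonal with positive entries. The inductive claim is that
\begin{equation*}
C^{(N)}_{m,n} = \begin{pmatrix} P_1^{(N)}(m^2) & -m\,Q_1^{(N)}(m^2) \\ -m\,Q_2^{(N)}(m^2) & P_2^{(N)}(m^2) \end{pmatrix},
\end{equation*}
where the $P_i^{(N)}$ and $Q_i^{(N)}$ are polynomials in $m^2$ with non-negative coefficients, and the $m$-independent terms of $P_1^{(N)}$ and $P_2^{(N)}$ are the partial products $\prod_{k=0}^N 1/c_{1,n}(k)$ and $\prod_{k=0}^N c_{2,n}(k)$ respectively. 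The inductive step is an entrywise calculation of $C_{m,n}(N+1)\cdot C^{(N)}_{m,n}$: each $\tilde E$-factor toggles the diagonal/off-diagonal type and contributes one power of $-m$, each $F$-factor preserves the type and contributes $m^2$, and all scalar entries involved are non-negative, so the structure is preserved with the correct parity and signs.

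To pass to the limit, I would note that the estimate $\sum_k \|C_{m,n}(k) - I\| < \infty$ from the previous proposition is uniform for $m$ in compact subsets of $\C$, so $C^{(N)}_{m,n}\to C_{m,n}$ uniformly on such sets, making the entries of $C_{m,n}$ entire functions of $m$. Uniform convergence of polynomials on a neighborhood of $0$ implies convergence of their Taylor coefficients, so the limiting entries are power series whose coefficients are the limits of the corresponding coefficients of $P_i^{(N)}$ and $Q_i^{(N)}$, hence still non-negative. Defining $F_0(m^2) = (C_{m,n})_{11} - 1/J_1(n)$, $F_3(m^2) = (C_{m,n})_{22} - J_2(n)$, $F_1(m^2) = -(C_{m,n})_{12}/m$ and $F_2(m^2) = -(C_{m,n})_{21}/m$ yields the claimed structure.

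The explicit formula for $F_3$ then follows from the determinant identity $\det C_{m,n} = J_2(n)/J_1(n)$ established just above: substituting the above expressions for the entries into $(C_{m,n})_{11}(C_{m,n})_{22} - (C_{m,n})_{12}(C_{m,n})_{21} = J_2(n)/J_1(n)$ and solving algebraically for $F_3$ reproduces the stated formula. The main obstacle I anticipate is the passage to the limit in a way that preserves the power-series-with-non-negative-coefficients structure: for the $(1,1)$-entry the coefficients are monotone non-decreasing in $N$ since $1/c_{1,n}(k)\ge 1$, but for the $(2,2)$-entry such monotonicity fails, so one must rely on uniform convergence on compact $m$-sets together with the elementary fact that uniform convergence of polynomials on a neighborhood of zero implies convergence of their Taylor coefficients.
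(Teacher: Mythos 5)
Your proof is correct and takes essentially the same route as the paper: induction on the finite partial products to establish the polynomial-in-$m^2$ structure with the correct sign pattern, followed by a uniform-convergence (Weierstrass) argument to pass to the limit, and the determinant identity $\det C_{m,n}=J_2(n)/J_1(n)$ to pin down $F_3$. Your explicit decomposition $C_{m,n}(k)=D_n(k)-m\,\tilde E_n(k)+m^2 F_n(k)$ together with the parity/type bookkeeping, and your observation that the $(2,2)$-entry coefficients are not monotone in $N$ (so that one must genuinely invoke convergence of Taylor coefficients rather than monotone limits), are useful elaborations of what the paper compresses into ``easy to verify by induction.''
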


\begin{proof}
It is easy to verify by induction that for each $k$ the product $\prod_{i=0}^k C_{m,n}(i)$ is of the form

\begin{equation*}
\prod_{i=0}^k C_{m,n}(i) = \left(
\begin{array}{cc}
\prod_{i=0}^k \frac{1}{c_{1,n}(i)} + \sum_{i=0}^k u_0(n,k)(m^2)^i & -m\sum_{i=0}^k u_1(n,k)(m^2)^i \\
-m\sum_{i=0}^k u_2(n,k)(m^2)^i & \prod_{i=0}^k c_{2,n}(i) + \sum_{i=0}^k u_3(n,k)(m^2)^i 
\end{array}\right),
\end{equation*}
where the above sums are polynomials in $m^2$ with positive coefficients.   Moreover as $k\to\infty$, the polynomials converge to a power series, by the Weierstrass Analytic Convergence Theorem, see \cite{Ahlfors}.   Thus the result follows.
\end{proof}

\section{The Class of Boundary Conditions}

As defined above, the operator $D$ is not a Fredholm operator on its maximal domain: it has an infinite dimensional kernel, as is typical for differential operators on manifolds with boundary. In this section we single out two types of solutions  in the kernel of $\mathcal{A}_{m,n}$  that play the role of similar solutions in \cite{KM4} made of modified Bessel functions of first and second kind. Like in the commutative case, they are then used to define a class boundary conditions for $D$ that turns $D$ into an invertible operator. First we describe the kernel of $D$ through the kernel of $\mathcal{A}_{m,n}$.

 
\begin{prop}
Let $\mathcal{A}_{m,n}$ be the operator given by equation (\ref{matrix_finite_diff_oper}), then

\begin{equation*}
\textrm{Ker }\mathcal{A}_{m,n} = \left\{\left(\prod_{i=0}^{k-1} C_{m,n}(i)\right)v \right\}
\end{equation*}
for some vector $v$.
\end{prop}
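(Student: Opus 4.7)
The plan is to reduce the kernel condition $\mathcal{A}_{m,n}h=0$ to a first-order linear matrix recurrence and then solve it by iteration, which is possible because the matrix $A_{m,n}(k+1)$ sitting outside the bracket in (\ref{matrix_finite_diff_oper}) turns out to be invertible.

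First, I would note that
\begin{equation*}
\det A_{m,n}(k+1) = a_{n+1}(k)\,a_{n}(k+1)\,c_{1,n}(k),
\end{equation*}
which is strictly positive by the positivity assumptions on the weights $a_n(k)$ and on the sequences $c_{1,n}(k)$. Hence $A_{m,n}(k+1)$ is invertible for every $k\geq 0$, and, writing $h(k)=(f(k),g(k))^t$, the equation $\mathcal{A}_{m,n}h(k)=0$ is equivalent to
\begin{equation*}
A_{m,n}(k+1)\bigl[h(k+1)-C_{m,n}(k)h(k)\bigr]=0
\quad\Longleftrightarrow\quad
h(k+1)=C_{m,n}(k)\,h(k).
\end{equation*}
A one-line induction on $k$, with the convention that the empty product equals $I$, then gives
\begin{equation*}
h(k) = \Bigl(\prod_{i=0}^{k-1} C_{m,n}(i)\Bigr) h(0),
\end{equation*}
and setting $v:=h(0)\in\C^2$ yields exactly the asserted form. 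Conversely, any sequence of this shape obviously satisfies the recurrence and hence lies in $\textrm{Ker }\mathcal{A}_{m,n}$.

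There is no real obstacle in the argument itself; the only subtlety worth flagging is that the kernel is taken inside the weighted space $\ell^2_{a_n}(\Z_{\ge 0})\otimes \ell^2_{a_{n+1}}(\Z_{\ge 0})$, so not every initial vector $v\in\C^2$ will produce an admissible sequence. The proposition is thus a parametrization statement: elements of the kernel are necessarily of this form, and the set of $v$ for which the corresponding sequence lies in the weighted space cuts out an (at most two-dimensional) subspace of $\C^2$. The precise dimension count, together with the identification of the noncommutative analogues of the $I_n$-type and $K_n$-type special solutions from \cite{KM4}, is naturally deferred to later sections where the asymptotic behaviour of the transfer matrix $\prod_{i=0}^{k-1}C_{m,n}(i)$ (controlled by Proposition \ref{C_structure}) is invoked.
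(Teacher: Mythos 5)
Your argument is correct and is essentially the paper's own proof: factor out the invertible matrix $A_{m,n}(k+1)$ to reduce $\mathcal{A}_{m,n}h=0$ to the first-order recurrence $h(k+1)=C_{m,n}(k)h(k)$, then iterate. Your closing remark about the weighted-$\ell^2$ membership of the resulting sequences is a sensible caveat that the paper leaves implicit, but it does not change the substance of the proof.
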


\begin{proof}
By formula (\ref{matrix_finite_diff_oper}) we need to solve the equation:

\begin{equation*}
A_{m,n}(k+1)\left[\left(
\begin{array}{c}
f(k+1) \\ g(k+1)
\end{array}\right) - C_{m,n}(k)\left(
\begin{array}{c}
f(k) \\ g(k)
\end{array}\right)\right] = \left(
\begin{array}{c}
0 \\ 0
\end{array}\right).
\end{equation*}
Solving recursively we see that

\begin{equation*}
\left(
\begin{array}{c}
f(k+1) \\ g(k+1)
\end{array}\right) = \left(\prod_{i=0}^k C_{m,n}(i)\right)v
\end{equation*}
for some arbitrary vector $v$.
\end{proof}

If $F_{m,n}(k)\in \textrm{Ker }\mathcal{A}_{m,n}$, then by Proposition \ref{C_existence} $\lim_{k\to\infty} F_{m,n}(k) = F_{m,n}(\infty)$ exists and is finite.

We define $I_{m,n}(k)$ to be a special element of $\textrm{Ker }\mathcal{A}_{m,n}$, so:

\begin{equation}\label{nc_special_soln_I}
I_{m,n}(k) = \left(\prod_{i=0}^{k-1} C_{m,n}(i)\right)I_{m,n}(0) = \left(\prod_{i=0}^{k-1} C_{m,n}(i)\right)\left(
\begin{array}{c}
I_{m,n}^{(1)}(0) \\ I_{m,n}^{(2)}(0)
\end{array}\right),
\end{equation}
which additionally satisfies the initial regularity equation (\ref{DFcomp0}) with zero right-hand side.  We normalize $I_{m,n}(k)$ in the following way: we set

\begin{equation*}
I_{m,n}^{(1)}(0) = -1 \quad\textrm{and}\quad I_{m,n}^{(2)}(0) = \frac{m}{a_{n}(0)}.
\end{equation*}

We also want to define a class of complementary solutions, denoted $K_{m,n}(k)$, for which

\begin{equation}\label{nc_special_soln_K}
K_{m,n}(k) = \left(\prod_{i=0}^{k-1} C_{m,n}(i)\right)K_{m,n}(0) = \left(\prod_{i=0}^{k-1} C_{m,n}(i)\right) \left(
\begin{array}{c}
K_{m,n}^{(1)}(0) \\ K_{m,n}^{(2)}(0)
\end{array}\right),
\end{equation}
and such that for every $k,n\ge0$ and $m\in\Z$, the set $\{I_{m,n}(k), K_{m,n}(k)\}$ is linearly independent.   This can be achieved by requesting the following first three sign conditions at infinity: 

\begin{equation}\label{nc_K_conditions}
\begin{aligned}
&1.)\ m>0:\ K_{m,n}^{(1)}(\infty)>0 \textrm{ and } K_{m,n}^{(2)}(\infty)>0 \\
&2.)\ m<0:\ K_{m,n}^{(1)}(\infty)<0 \textrm{ and } K_{m,n}^{(2)}(\infty)>0 \\
&3.)\ m=0:\ K_{0,n}^{(1)}(\infty) = 0 \textrm{ and } K_{0,n}^{(2)}(\infty)\ne 0\\
&4.)\ m\neq 0:\ \frac{K_{m,n}^{(1)}(\infty)}{K_{m,n}^{(2)}(\infty)}\to 0\quad\textrm{as}\quad |m|\to\infty\textrm{ uniformly in }n.
\end{aligned}
\end{equation}
Additionally, the sign conditions, and the normalization of $I_{m,n}$, lead to monotonicity properties of $I_{m,n}$ and $K_{m,n}$ that are crucially used in estimates in the last section.
The key asymptotic fourth condition above is also required to prove compactness of the resolvent. Any collection $K_{m,n}(k)$ satisfying (\ref{nc_special_soln_K}) and (\ref{nc_K_conditions}) will be referred to, abusing the terminology, as a quantum $K$ function.

We have:

\begin{prop}\label{linear_independ}
Let $K_{m,n}(k)$ be any quantum $K$ function. For every $k,n\ge0$ and $m\in\Z$ the vectors $I_{m,n}(k)$ and $K_{m,n}(k)$ are linearly independent  in $\C^2$. Moreover for all $n\ge0$ and $m\in\Z$, $I_{m,n}(\infty)$ and $K_{m,n}(\infty)$ are also linearly independent.
\end{prop}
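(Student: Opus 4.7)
The plan is to reduce the proposition to a single determinant/sign computation at infinity and then read off the result from Proposition \ref{C_structure} and the sign conditions (\ref{nc_K_conditions}).

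First I would observe that each matrix $C_{m,n}(i)$ is invertible, since $\det C_{m,n}(i) = c_{2,n}(i)/c_{1,n}(i)\neq 0$, and that the infinite product $C_{m,n}$ is invertible by Proposition \ref{C_existence}. Since both $I_{m,n}(k)$ and $K_{m,n}(k)$ are obtained by applying the invertible matrix $\prod_{i=0}^{k-1} C_{m,n}(i)$ to the vectors $I_{m,n}(0)$ and $K_{m,n}(0)$ respectively, and since passage to the limit $k\to\infty$ is the invertible transformation $C_{m,n}$, the three statements ``$I_{m,n}(k)$ and $K_{m,n}(k)$ are linearly independent for some (and hence every) finite $k\geq 0$'', ``$I_{m,n}(0)$ and $K_{m,n}(0)$ are linearly independent'', and ``$I_{m,n}(\infty)$ and $K_{m,n}(\infty)$ are linearly independent'' are all equivalent. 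Hence it suffices to verify linear independence at infinity.

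Next I would compute $I_{m,n}(\infty)=C_{m,n} I_{m,n}(0)$ using the explicit normalization $I_{m,n}(0)=(-1,\,m/a_{n}(0))^{t}$ and the structural formula in Proposition \ref{C_structure}. Because every $F_j(m^{2})$ is a power series in $m^{2}$ with nonnegative coefficients, and the products $\prod 1/c_{1,n}(i)$ and $\prod c_{2,n}(i)$ are strictly positive, a direct calculation shows that $I_{m,n}^{(1)}(\infty)<0$ for all $m,n$, while $I_{m,n}^{(2)}(\infty)$ has the same strict sign as $m$ (and equals $0$ if $m=0$).

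Finally, I would compute the $2\times 2$ determinant $\det\bigl(I_{m,n}(\infty)\ \ K_{m,n}(\infty)\bigr)$ in the three cases of (\ref{nc_K_conditions}). For $m>0$ it reads (neg)(pos)$-$(pos)(pos)$<0$; for $m<0$ it reads (neg)(pos)$-$(neg)(neg)$<0$; and for $m=0$ it reduces to $I_{0,n}^{(1)}(\infty)\cdot K_{0,n}^{(2)}(\infty)\neq 0$. In each case the determinant is nonzero, which gives linear independence at infinity and, by the reduction in the first step, at every finite $k$ as well. I do not anticipate a genuine obstacle here: the normalization of $I_{m,n}$ and the sign conditions on $K_{m,n}$ are designed precisely so that the three cases are immediate from Proposition \ref{C_structure}; the only care needed is in bookkeeping signs from the power series $F_j(m^{2})$.
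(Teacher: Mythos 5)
Your proof is correct, and it takes a genuinely more economical route than the paper's. You first observe that the transition matrices $\prod_{i=0}^{k-1}C_{m,n}(i)$ and the full infinite product $C_{m,n}$ are invertible, so linear independence of the pair $(I_{m,n},K_{m,n})$ at $k=0$, at any finite $k$, and at $k=\infty$ are all equivalent; you then check only the value at $\infty$, by computing the sign of $\det\bigl(I_{m,n}(\infty)\ K_{m,n}(\infty)\bigr)$ from Proposition \ref{C_structure} and the sign conditions (\ref{nc_K_conditions}). This is sound: the sign computation for $I_{m,n}^{(1)}(\infty)<0$ and $\sign I_{m,n}^{(2)}(\infty)=\sign m$ follows exactly as you indicate, and the three-case determinant check is correct. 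The paper instead proves the sign statement pointwise for every finite $k$ (using the finite products from the proof of Proposition \ref{C_structure} for $I$, and the ``backward'' representation $K_{m,n}(k)=\bigl(\prod_{i=k}^\infty C_{m,n}(i)\bigr)^{-1}K_{m,n}(\infty)$ together with positivity of the entries of $C_{m,n}(i)^{-1}$ for $K$), and then reads off independence from the mismatch of quadrants. The paper's longer route is deliberate: the pointwise positivity of $-I_{m,n}^{(1)}(k)$, $I_{m,n}^{(2)}(k)$, $K_{m,n}^{(1)}(k)$, $K_{m,n}^{(2)}(k)$ for all $k$ is exactly what the next lemma invokes (it cites ``the proof of Proposition \ref{linear_independ}''), and it also underpins the monotonicity and comparison estimates used in the final compactness proof. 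So your argument proves the stated proposition more cleanly, but the sign facts you bypass would have to be reinstated separately to support the rest of Section 8.
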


\begin{proof}

Consider the case $m>0$.  Recall that we have

\begin{equation*}
I_{m,n}(k) = \left(\prod_{i=0}^{k-1} C_{m,n}(i)\right)\left(
\begin{array}{c}
-1 \\ \frac{m}{a_{n}(0)}
\end{array}\right)\quad\textrm{and}\quad K_{m,n}(k) = \left(\prod_{i=0}^{k-1} C_{m,n}(i)\right)\left(
\begin{array}{c}
K_{m,n}^{(1)}(0) \\ K_{m,n}^{(2)}(0)
\end{array}\right)  .
\end{equation*}
Using the formulas for the product of $C_{m,n}(k)$ in the proof of Proposition \ref{C_structure} we write out the components to get 

\begin{equation*}
\begin{aligned}
&I_{m,n}^{(1)}(k) = -\left(\prod_{i=0}^k \frac{1}{c_{1,n}(i)} + \sum_{i=0}^k u_0(n,k)(m^2)^i  + \frac{m^2\sum_{i=0}^k u_1(n,k)(m^2)^i }{a_{n}(0)}\right), \\
&I_{m,n}^{(2)}(k) = m\sum_{i=0}^k u_2(n,k)(m^2)^i + \frac{m}{a_{n}(0)}\left(\prod_{i=0}^k c_{2,n}(i) + \sum_{i=0}^k u_3(n,k)(m^2)^i \right).
\end{aligned}
\end{equation*}
Since all coefficients are positive and $m>0$, we see that $I_{m,n}^{(1)}(k)$ is negative and $I_{m,n}^{(2)}(k)$ is positive.  

To analyze $K_{m,n}(k)$ we use an alternative, equivalent, way to write it as:

\begin{equation*}
K_{m,n}(k) = \left(\prod_{i=k}^\infty C_{m,n}(i)\right)^{-1}K_{m,n}(\infty) .
\end{equation*}
Since the components of $K_{m,n}(\infty)$ are both positive, and the matrix $C_{m,n}(i)^{-1}$ has all positive entries for $m>0$, it follows that $K_{m,n}^{(1)}(k)$ and $K_{m,n}^{(2)}(k)$ are positive.   Therefore since one of the components of $I_{m,n}(k)$ is negative and both components of $K_{m,n}(k)$ are positive, it is impossible for them to be linearly dependent.   

In the case $m<0$ both components of $I_{m,n}(k)$ are negative and one of the components of $K_{m,n}(k)$ is positive again showing they can not be linearly dependent.   When $m=0$ then all matrices $C_{m,n}(k)$ are diagonal and so $K_{m,n}^{(1)}(k)=0$ and $I_{m,n}^{(2)}(k)=0$, which implies independence. Finally the same arguments are valid when $k=\infty$, thus the proof is complete.
\end{proof}

Now we can define the class of boundary conditions we consider for $D$.   Given an $F\in\mathcal{H}$ such that $DF\in\mathcal{H}$, the Fourier coefficients $f_{m,n+1}(k)$ and $g_{m,n}(k)$ given by equation (\ref{F_Fourier_decomp}), have limits as $k\to\infty$ for all $n\ge0$ and $m\in\Z$.   This follows from the calculation of the resolvent in the next section, see equation (\ref{Q_at_infinity}) for details.   These limits will be denoted by $f_{m,n+1}(\infty)$ and $g_{m,n}(\infty)$.   

Any quantum $K$ function defines a boundary condition in the following way.
%

\begin{defin}
Given $K_{m,n}(k)$ satisfying (\ref{nc_special_soln_K}) and (\ref{nc_K_conditions}) we set the domain dom$(D)$ of $D$
to be the space of all $F\in\mathcal{H}$ such that $DF\in\mathcal{H}$ and such that for all $m\in\Z$, $n\ge0$ there exist $\beta_{m,n}\in\C$ such that
\begin{equation}\label{bndy_cond}
\begin{aligned}
g_{m,n}(\infty) &= \beta_{m,n}K_{m,n}^{(1)}(\infty) \\
 f_{m,n+1}(\infty) &= \beta_{m,n}K_{m,n}^{(2)}(\infty).
\end{aligned}
\end{equation}
\end{defin}

The above conditions at infinity can be restated to mimic the boundary condition in the classical case:

\begin{equation*}
g_{m,n}(\infty)K_{m,n}^{(2)}(\infty) - f_{m,n+1}(\infty)K_{m,n}^{(1)}(\infty) = 0.
\end{equation*}

The following theorem is the main result of this paper and is proved at the end of this paper.

\begin{theo}\label{quant_Q_theorem}
The quantum Dirac operator $D$, defined by (\ref{D_Def}) subject to the boundary condition defined in equation (\ref{bndy_cond}), is an invertible operator whose inverse $Q$, is a compact operator.
\end{theo}

We end this section with listing of useful recurrence relations for $I_{m,n}(k)$ and $K_{m,n}(k)$ that are easy consequences of their definitions. Let $H_{m,n}(k)$ be $I_{m,n}(k)$ or $K_{m,n}(k)$ with components $H_{m,n}(k) = (H_{m,n}^{(1)}(k), H_{m,n}^{(2)}(k))^t$, then we have

\begin{equation}\label{recurrence_matrix}
H_{m,n}(k+1) = C_{m,n}(k)H_{m,n}(k).
\end{equation}
This gives the following recurrence relation:

\begin{equation}\label{recurrence_1}
\begin{aligned}
&H_{m,n}^{(1)}(k+1) - \frac{1}{c_{1,n}(k)}H_{m,n}^{(1)}(k) = -\frac{m}{a_{n+1}(k)c_{2,n}(k)}H_{m,n}^{(2)}(k) \\
&H_{m,n}^{(2)}(k+1) - c_{2,n}(k)H_{m,n}^{(2)}(k) =- \frac{m}{a_{n}(k+1)c_{1,n}(k)}H_{m,n}^{(1)}(k) \\
&+ \frac{m^2}{a_{n+1}(k)a_{n}(k+1)c_{1,n}(k)}H_{m,n}^{(2)}(k).
\end{aligned}
\end{equation}
Similarly, using the relation

\begin{equation*}
H_{m,n}(k) = (C_{m,n}(k))^{-1}H_{m,n}(k+1),
\end{equation*}
we can produce the following two (equivalent) equations:

\begin{equation}\label{recurrence_2}
\begin{aligned}
&H_{m,n}^{(2)}(k) - \frac{1}{c_{2,n}(k)}H_{m,n}^{(2)}(k+1) = \frac{m}{a_{n}(k+1)c_{2,n}(k)}H_{m,n}^{(1)}(k+1) \\
&H_{m,n}^{(1)}(k) - c_{1,n}(k)H_{m,n}^{(1)}(k+1) = \frac{m}{a_{n+1}(k)c_{2,n}(k)}H_{m,n}^{(2)}(k+1) +\\
& + \frac{m^2}{a_{n+1}(k)a_{n}(k+1)c_{2,n}(k)}H_{m,n}^{(1)}(k+1) .
\end{aligned}
\end{equation}
The recurrence relations above are extensively used in the next two section in the analysis of the parametrix of $D$.

\section{Parametrix of the quantum Dirac type operator}
In the previous section we introduced a new domain of $D$.  To account for this, we redefine the domain of $\mathcal{A}_{m,n}$ in the following way:
dom$(\mathcal{A}_{m,n})$ is the space of all $h\in\ell_{a_n}^2(\Z_{\ge0})\otimes\ell_{a_{n+1}}^2(\Z_{\ge0})$ such that $\|\mathcal{A}_{m,n}h\|_{a_{n+1}\otimes a_n} < \infty$, and such that there exists $\beta\in\C$ such that
\begin{equation}\label{new_domain_matrix_diff_oper}
\begin{aligned}
x(\infty) &= \beta K_{m,n}^{(1)}(\infty) \\
y(\infty) &= \beta K_{m,n}^{(2)}(\infty),
\end{aligned}
\end{equation}
where $h = (x,y)^t$.  The existence of such limits $x(\infty)$ and $y(\infty)$ follows from the calculation of the resolvent in this section, see equation (\ref{Q_at_infinity}).

\begin{prop}
Given $h\in\textrm{dom }\mathcal{A}_{m,n}$, if $h\in\textrm{Ker }\mathcal{A}_{m,n}$ and $h = (x,y)^t$ satisfies 

\begin{equation*}
a_n(0)y(0) + mx(0) = 0,
\end{equation*}
then $h(k)=0$ for every $k\ge0$. In other words, the kernel of $\mathcal{A}_{m,n}$ subject to initial regularity condition (\ref{DFcomp0}) is trivial.
\end{prop}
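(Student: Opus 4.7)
The plan is to reduce the problem to linear independence of the two distinguished kernel solutions $I_{m,n}$ and $K_{m,n}$ at infinity, which was already established in Proposition \ref{linear_independ}.

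First I would parameterize the kernel. Since $h \in \textrm{Ker }\mathcal{A}_{m,n}$, the recursive formula $h(k+1) = C_{m,n}(k) h(k)$ gives $h(k) = \bigl(\prod_{i=0}^{k-1} C_{m,n}(i)\bigr) h(0)$ for all $k$, so the entire sequence is determined by the initial vector $h(0) = (x(0),y(0))^t$. Next I would use the initial regularity condition: the equation $a_n(0)y(0) + m x(0) = 0$ says precisely that $h(0)$ is a scalar multiple of $(-1,\, m/a_n(0))^t = I_{m,n}(0)$ — this is how the normalization of $I_{m,n}$ was chosen in (\ref{nc_special_soln_I}). Hence there exists $\alpha \in \C$ with $h(0) = \alpha I_{m,n}(0)$, and by uniqueness of the recursion $h(k) = \alpha I_{m,n}(k)$ for all $k \ge 0$.

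Now I would invoke the boundary condition built into the new domain. By (\ref{new_domain_matrix_diff_oper}) there exists $\beta \in \C$ with
\begin{equation*}
\alpha I_{m,n}(\infty) = h(\infty) = \beta K_{m,n}(\infty),
\end{equation*}
i.e.\ $\alpha I_{m,n}(\infty) - \beta K_{m,n}(\infty) = 0$ in $\C^2$. By Proposition \ref{linear_independ}, the vectors $I_{m,n}(\infty)$ and $K_{m,n}(\infty)$ are linearly independent, which forces $\alpha = \beta = 0$. Therefore $h(k) = 0$ for every $k \ge 0$.

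I do not expect a significant obstacle here, since the work has already been done elsewhere: the normalization of $I_{m,n}$ at $k=0$ was rigged to match the initial regularity equation (\ref{DFcomp0}), and the nontrivial linear independence of $I_{m,n}(\infty)$ and $K_{m,n}(\infty)$ is exactly the content of Proposition \ref{linear_independ}. The only mild subtlety is the existence of the limit $h(\infty)$, but this is guaranteed by $h \in \textrm{dom }\mathcal{A}_{m,n}$ together with Proposition \ref{C_existence} applied along the kernel solution.
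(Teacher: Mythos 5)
Your proof is correct and follows essentially the same route as the paper's: both use the initial regularity condition to force $h$ to be a scalar multiple of $I_{m,n}$, then use the boundary condition at infinity together with the linear independence of $I_{m,n}(\infty)$ and $K_{m,n}(\infty)$ from Proposition \ref{linear_independ} to conclude the scalar is zero. The only cosmetic difference is that the paper first writes $h(k)=c_1 I_{m,n}(k)+c_2 K_{m,n}(k)$ and deduces $c_2=0$ at $k=0$, whereas you observe directly that the initial regularity equation cuts out the line spanned by $I_{m,n}(0)$; the substance is identical.
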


\begin{proof}
First notice that we can write any element $h(k)\in \textrm{Ker }\mathcal{A}_{m,n}$ as 

\begin{equation*}
h(k) = c_1I_{m,n}(k) + c_2K_{m,n}(k),
\end{equation*}
since by Proposition \ref{linear_independ}, the nonzero two-vectors $I_{m,n}(k)$ and $K_{m,n}(k)$ are linearly independent.   However if $k=0$ then both $h(0)$ and $I_{m,n}(0)$ satisfy the equation $a_n(0)y(0) + mx(0) = 0$ so there exists some constant $c_3$ such that:

\begin{equation*}
c_3I_{m,n}(0) = h(0) = c_1I_{m,n}(0) + c_2K_{m,n}(0).
\end{equation*}
This would imply that $K_{m,n}(0)$ is a scalar multiple of $I_{m,n}(0)$, which is impossible by the linear independence of the solutions, hence $c_2=0$.   If $k=\infty$ then, since $h\in\textrm{dom }\mathcal{A}_{m,n}$, there is some constant $c_4$, such that:

\begin{equation*}
c_4K_{m,n}(\infty) = h(\infty) = c_1I_{m,n}(\infty),
\end{equation*}
which implies that $K_{m,n}(\infty)$ is a scalar multiple of $I_{m,n}(\infty)$, but again this is impossible by Proposition \ref{linear_independ}, hence $c_1=0$.   Therefore $h(k)=0$ for every $k$.   
%
This completes the proof.
\end{proof}

Next we discuss the non-homogeneuous equation $DF=G$, the solution of which leads to the parametrix (in our case the inverse) of the quantum Dirac type operator $D$, subject to boundary conditions (\ref{bndy_cond}).  

We use the following notation: for a vector $v=(v_1,v_2)^t$ we write $v^\perp := (v_2,-v_1)^t$.

\begin{prop}
Let $\mathcal{A}_{m,n}$ be the one-step matrix difference operator defined by equation (\ref{matrix_finite_diff_oper}), then $\mathcal{A}_{m,n}$, subject to the boundary conditions given by the equation (\ref{new_domain_matrix_diff_oper}), and subject to the initial regularity condition (\ref{DFcomp0}) is an invertible operator with inverse $Q^{(m,n)}$ given by (\ref{Q_def}) below.
\end{prop}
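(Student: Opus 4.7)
The plan is to construct $Q^{(m,n)}$ explicitly by variation of parameters, using the two linearly independent fundamental solutions $I_{m,n}$ and $K_{m,n}$ of the homogeneous equation $\mathcal{A}_{m,n} h = 0$ (Proposition \ref{linear_independ}) as the building blocks of a discrete Green's function. Injectivity of $\mathcal{A}_{m,n}$ subject to the initial regularity condition and the boundary condition at infinity was just established in the previous proposition, so only surjectivity and the explicit formula for the inverse remain.

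First, I would rewrite $\mathcal{A}_{m,n} h = g$ as the first-order recursion $h(k+1) = C_{m,n}(k) h(k) + A_{m,n}(k+1)^{-1} g(k)$ and seek a solution of the form $h(k) = u(k)\, I_{m,n}(k) + v(k)\, K_{m,n}(k)$. Using $C_{m,n}(k) H_{m,n}(k) = H_{m,n}(k+1)$ (cf. (\ref{recurrence_matrix})) the equation collapses to
\begin{equation*}
[u(k+1)-u(k)]\,I_{m,n}(k+1) + [v(k+1)-v(k)]\,K_{m,n}(k+1) = A_{m,n}(k+1)^{-1} g(k).
\end{equation*}
Since $\{I_{m,n}(k+1),K_{m,n}(k+1)\}$ is a basis of $\C^2$, Cramer's rule gives explicit one-step increments for $u$ and $v$ in terms of the Wronskian $W_{m,n}(k+1) := I_{m,n}(k+1)\cdot K_{m,n}(k+1)^\perp$, which is nonzero everywhere because $W_{m,n}(k+1) = \det C_{m,n}(k)\cdot W_{m,n}(k)$ and $\det C_{m,n}(k) = c_{2,n}(k)/c_{1,n}(k) \ne 0$.

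Next, I would pin down $u(0)$ and $v(0)$ using the two boundary conditions. The normalization of $I_{m,n}$ gives $a_n(0) I_{m,n}^{(2)}(0) + m I_{m,n}^{(1)}(0) = 0$, so the initial regularity condition (\ref{DFcomp0}) applied to $h(0) = u(0)I_{m,n}(0) + v(0)K_{m,n}(0)$ reduces to
\begin{equation*}
v(0)\bigl[a_n(0) K_{m,n}^{(2)}(0) + m K_{m,n}^{(1)}(0)\bigr] = q_{m,n}(0),
\end{equation*}
and the bracket is nonzero (it is essentially the Wronskian at $0$), which uniquely determines $v(0)$. For the condition at infinity (\ref{new_domain_matrix_diff_oper}), requiring $h(\infty)$ to be a multiple of $K_{m,n}(\infty)$ forces $u(\infty) = 0$; combined with telescoping $u(\infty) = u(0) + \sum_{k\ge 0}[u(k+1)-u(k)]$ this determines $u(0)$ as an explicit sum involving $g$. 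Telescoping then produces $u(k), v(k)$ for all $k$, and reassembling yields the Green's function formula (\ref{Q_def}) in the standard split form: a bilinear kernel built from $I_{m,n}(k)$ and $K_{m,n}(j+1)^\perp$ for $k \le j$, from $K_{m,n}(k)$ and $I_{m,n}(j+1)^\perp$ for $k > j$, weighted by $W_{m,n}(j+1)^{-1} A_{m,n}(j+1)^{-1}$, plus a rank-one boundary term encoding the contribution of $q_{m,n}(0)$ through $K_{m,n}(k)$.

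The main obstacle I foresee is the bookkeeping needed to cast the variation-of-parameters expressions into the clean kernel form of (\ref{Q_def}) once the $A_{m,n}(k+1)^{-1}$ factors, the $v^\perp$ pairings, and the $\det C_{m,n}$ ratios in $W_{m,n}$ are all expanded. Beyond that, one should verify pointwise convergence of the infinite sum defining $u(0)$ and check that the resulting $h$ satisfies $v(\infty)=\beta_{m,n}$ with $\beta_{m,n}$ the unique constant matching the boundary condition. The deeper analytic task of showing $Q^{(m,n)}$ is bounded (and ultimately that $Q = \bigoplus Q^{(m,n)}$ is compact on $\mathcal{H}$) is the substance of Theorem \ref{quant_Q_theorem} and is deferred to the next section.
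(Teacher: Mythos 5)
Your proposal is correct and is, in substance, the same argument the paper uses: variation of parameters built on the pair of fundamental solutions $I_{m,n}$, $K_{m,n}$, with the initial regularity condition fixing one constant and the condition at infinity fixing the other. The organization differs slightly: the paper first writes the general solution as $\prod_{i<k} C_{m,n}(i)\,\sum_{i\le k}\bigl(\prod_{j<i}C_{m,n}(j)\bigr)^{-1}A_{m,n}(i)^{-1}r(i) + \alpha\,I_{m,n}(k)$ (one free constant $\alpha$, because the initial regularity condition has already been absorbed into the particular choice $h(0)=(0,q(0)/a_n(0))^t$), solves for $\alpha$ by pairing the value at infinity with $K_{m,n}(0)^\perp$, and only afterwards decomposes into the $I$--$K$ basis to obtain the coefficients $e^{(1)}_{m,n}$, $e^{(2)}_{m,n}$ of (\ref{Q_def}); you instead expand $h(k)=u(k)I_{m,n}(k)+v(k)K_{m,n}(k)$ from the outset, use Cramer's rule on the increments, and then fix $v(0)$ from the initial regularity condition and $u(0)$ by forcing $u(\infty)=0$. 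Your bookkeeping is the more classical discrete Green's function form and exposes the split-kernel structure of $Q^{(m,n)}$ immediately, whereas the paper's route is better adapted to the matrix transfer-operator formulation it already set up; the two are computationally equivalent (your ``bracket'' $a_n(0)K_{m,n}^{(2)}(0)+mK_{m,n}^{(1)}(0)$ is precisely $a_n(0)\tau_{m,n}$, and your Wronskian $W_{m,n}(k+1)$ is $\tau_{m,n}\prod_{i\le k}\frac{c_{2,n}(i)}{c_{1,n}(i)}$, matching the paper's use of (\ref{special_soln_perp})). The one point to make explicit, which you flag but do not carry out, is the convergence of the telescoping sum defining $u(0)$; the paper handles this by citing Proposition \ref{C_existence} together with summability of $A_{m,n}(k)^{-1}$, and the same two facts suffice in your framing.
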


\begin{proof}

The goal is to solve the equation

\begin{equation*}
\mathcal{A}_{m,n}\left(
\begin{array}{c}
x \\ y
\end{array}\right)(k) = \left(
\begin{array}{c}
p(k) \\ -q(k+1)
\end{array}\right),
\end{equation*}
which becomes the following difference equation with matrix coefficients

\begin{equation*}
A_{m,n}(k+1)\left[\left(
\begin{array}{c}
x(k+1) \\ y(k+1)
\end{array}\right) - C_{m,n}(k)\left(
\begin{array}{c}
x(k) \\ y(k)
\end{array}\right)\right] = \left(
\begin{array}{c}
p(k) \\ -q(k+1)
\end{array}\right)
\end{equation*}
with $A_{m,n}(k+1)$ and $C_{m,n}(k)$ defined in formulas (\ref{A_matrix}) and (\ref{C_matrix}) respectively, while  the initial regularity condition is
\begin{equation*}
a_n(0)y(0)+mx(0)=q(0).
\end{equation*}
 Relabeling $h(k) = (x(k), y(k))^t$ and $r(k+1) = (p(k), -q(k+1))^t$, the system becomes 
$$A_{m,n}(k+1)(h(k+1) - C_{m,n}(k)h(k)) = r(k+1).$$   
Before applying the boundary condition, the solution $h(k)$ is not unique, with non-uniqueness due to one-dimensional kernel $I_{m,n}(k)$.
With this in mind we choose $h(0)$ to be the following particular solution of the initial regularity condition:
\begin{equation*}
h(0)=\left(0,\frac{q(0)}{a_n(0)}\right)^t.
\end{equation*}

%
%
Solving the difference equation by using variation of constants method we get

\begin{equation}\label{initial_Q}
Q^{(m,n)}r(k) := h(k) = \prod_{i=0}^{k-1}C_{m,n}(i)\sum_{i=0}^k\left(\prod_{j=0}^{i-1} C_{m,n}(j)\right)^{-1}\left(A_{m,n}(i)\right)^{-1}r(i) + \alpha I_{m,n}(k)
\end{equation}
for some parameter $\alpha$.  Here, for convenience, we set the $\prod_{j=k}^{k-1} C_{m,n}(j) =1$ for any $m$ and $n$, and also we define 
$\left(A_{m,n}(0)\right)^{-1}r(0) :=h(0) = \left(0,\frac{q(0)}{a_n(0)}\right)^t$ from above.
To apply the boundary conditions in equation (\ref{bndy_cond}), we need to know that $Q^{(m,n)}r(\infty)$ is well defined; this follows from Proposition \ref{C_existence} and the summability of $A_{m,n}(k)^{-1}$.   Therefore the boundary condition, equation (\ref{bndy_cond}), is well defined, and gives:

\begin{equation}\label{Q_at_infinity}
\begin{aligned}
Q^{(m,n)}r(\infty) &= C_{m,n}\sum_{i=0}^\infty \left(\prod_{j=0}^{i-1} C_{m,n}(j)\right)^{-1}\left(A_{m,n}(i)\right)^{-1}r(i) + \alpha I_{m,n}(\infty)=\\
 &= \beta K_{m,n}(\infty)
\end{aligned}
\end{equation}
for  some  constant $\beta$.  
The goal is to solve for $\alpha$. Below we use the following notation:
\begin{equation}
\tau_{m,n}:= \langle K_{m,n}(0), I_{m,n}(0)^\perp\rangle.
\end{equation}

Multiplying by $(C_{m,n})^{-1}$ and taking the inner product of both sides of equation (\ref{Q_at_infinity}) with $K_{m,n}(0)^\perp$, we get

\begin{equation*}
\left< \sum_{i=0}^\infty \left(\prod_{j=0}^{i-1} C_{m,n}(j)\right)^{-1}\left(A_{m,n}(i)\right)^{-1}r(i), K_{m,n}(0)^\perp\right> + \alpha\tau_{m,n} = 0,
\end{equation*}
which can now be solved for $\alpha$:

\begin{equation*}
\alpha = \frac{-1}{\tau_{m,n}}\left< \sum_{i=0}^\infty \left(\prod_{j=0}^{i-1} C_{m,n}(j)\right)^{-1}\left(A_{m,n}(i)\right)^{-1}r(i), K_{m,n}(0)^\perp\right> .
\end{equation*}

There is a partial cancellation between the two terms in equation (\ref{initial_Q}). To see this, we notice that,
since $I_{m,n}(0)$ and $K_{m,n}(0)$ are linearly independent, we can decompose any two-vector $v$ as $$v = v_1I_{m,n}(0) + v_2K_{m,n}(0),$$ where

\begin{equation*}
v_1 = -\frac{\left<v, K_{m,n}(0)^\perp\right>}{\tau_{m,n}} \qquad\textrm{and}\qquad v_2 = \frac{\left<v, I_{m,n}(0)^\perp\right>}
{\tau_{m,n}} ,
\end{equation*}

Applying this decomposition to vector

\begin{equation*}
v = \sum_{i=0}^k\left(\prod_{j=0}^{i-1} C_{m,n}(j)\right)^{-1}\left(A_{m,n}(i)\right)^{-1}r(i),
\end{equation*}
and using the formula for $\alpha$, and the formulas for $I_{m,n}(k)$ and $K_{m,n}(k)$, equations (\ref{nc_special_soln_I}) and (\ref{nc_special_soln_K}) respectively, we get

\begin{equation*}
\begin{aligned}
&Q^{(m,n)}r(k) = \prod_{i=0}^{k-1} C_{m,n}(i) \\
&\times\left[-\frac{1}{\tau_{m,n}}\left<\sum_{i=k+1}^\infty \left(\prod_{j=0}^{i-1} C_{m,n}(j)\right)^{-1}\!\!\!\!\left(A_{m,n}(i)\right)^{-1}r(i), K_{m,n}(0)^\perp\right>I_{m,n}(0) \right. \\
&\left. +\frac{1}{\tau_{m,n}}\left<\sum_{i=0}^k\left(\prod_{j=0}^{i-1} C_{m,n}(j)\right)^{-1}\left(A_{m,n}(i)\right)^{-1}r(i), I_{m,n}(0)^\perp\right>K_{m,n}(0)\right]. \\
\end{aligned}
\end{equation*}
The coefficients in the formula above will be denoted by:

\begin{equation}\label{non_hom_c_{1,n}}
e_{m,n}^{(1)}(k) := \frac{-1}{\tau_{m,n}}\left<\sum_{i=k+1}^\infty \left(\prod_{j=0}^{i-1} C_{m,n}(j)\right)^{-1}\left(A_{m,n}(i)\right)^{-1}r(i), K_{m,n}(0)^\perp\right>
\end{equation}
and

\begin{equation}\label{non_hom_c_{2,n}}
e_{m,n}^{(2)}(k) := \frac{1}{\tau_{m,n}}\left<\sum_{i=0}^k\left(\prod_{j=0}^{i-1} C_{m,n}(j)\right)^{-1}\left(A_{m,n}(i)\right)^{-1}r(i), I_{m,n}(0)^\perp\right> .
\end{equation}
With this notation we get the formula for $Q^{(m,n)}$:

\begin{equation}\label{Q_def}
\begin{aligned}
Q^{(m,n)}r(k) &= e_{m,n}^{(1)}(k)I_{m,n}(k) + e_{m,n}^{(2)}(k)K_{m,n}(k) && m\neq 0 \\
Q^{(m,n)}r(k) &= \sum_{i=0}^k \left(
\begin{array}{cc}
0 & 0 \\
0 & \frac{1}{a_{n}(i)}\prod_{j=i}^{k-1} c_{2,n}(j)
\end{array}\right)r(i)&& m=0.
\end{aligned}
\end{equation}
The proof is complete.
\end{proof}

As computed above, the operator $Q^{(m,n)}$ is not easy to analyze, mostly because it contains products of non-commuting matrices $C_{m,n}(k)$. Our strategy is to re-write the formulas for $Q^{(m,n)}$ in terms of quantum $I$ and $K$ functions, similarly to the commutative case, and then estimate the Hilbert-Schmidt norm of the parametrics using the recurrence relations (\ref{recurrence_1}) and (\ref{recurrence_2}).

We need the following observations.  For vectors $v=(v_1,v_2)^t$ and $u=(u_1,u_2)^t$ we have $\langle v^\perp, u \rangle = -\langle v, u^\perp\rangle$, where $\langle \cdot, \cdot\rangle$ is the standard Euclidean inner product on $\R^2$.   
 If $R$ is a $2\times2$ matrix and $v=(v_1,v_2)^t$, then we have $(Rv)^\perp = (\textrm{det }R)\left(R^t\right)^{-1}v^\perp$.  
  
As a consequence of the above and using the formula for the determinant of the $C_{m,n}(k)$ matrix, we get:

\begin{equation}\label{special_soln_perp}
\begin{aligned}
I_{m,n}(k)^\perp &= \prod_{i=0}^{k-1}\frac{c_{2,n}(i)}{c_{1,n}(i)}\left[\left(\prod_{i=0}^{k-1} C_{m,n}(i)\right)^{-1}\right]^t I_{m,n}(0)^\perp \\
K_{m,n}(k)^\perp &= \prod_{i=0}^{k-1}\frac{c_{2,n}(i)}{c_{1,n}(i)}\left[\left(\prod_{i=0}^{k-1} C_{m,n}(i)\right)^{-1}\right]^t K_{m,n}(0)^\perp .
\end{aligned}
\end{equation}
Consequently, inserting equations (\ref{special_soln_perp}) into formulas for the coefficients $e_{m,n}^{(1)}$, $e_{m,n}^{(2)}$, (\ref{non_hom_c_{1,n}}) and (\ref{non_hom_c_{2,n}}), gives:

\begin{equation*}
\begin{aligned}
&e_{m,n}^{(1)}(k) = 
\frac{1}{\tau_{m,n}}\sum_{i=k+1}^\infty \prod_{j=0}^{i-1}\frac{c_{1,n}(j)}{c_{2,n}(j)}(K_{m,n}^{(2)}(i), -K_{m,n}^{(1)}(i))\left(A_{m,n}(i)\right)^{-1}r(i) .
\end{aligned}
\end{equation*}
Similarly we also have:

\begin{equation*}
e_{m,n}^{(2)}(k) = \frac{1}{\tau_{m,n}}\sum_{i=0}^k \prod_{j=0}^{i-1}\frac{c_{1,n}(j)}{c_{2,n}(j)}(I_{m,n}^{(2)}(i), -I_{m,n}^{(1)}(i))\left(A_{m,n}(i)\right)^{-1}r(i).
\end{equation*}
These are to be understood as a row of a matrix times a column vector.   

It is convenient to rewrite $Q^{(m,n)}$ in the matrix notation:

\begin{equation*}
\begin{aligned}
Q^{(m,n)}r_n(k) &= e_{m,n}^{(1)}(k)I_{m,n}(k) + e_{m,n}^{(2)}(k)K_{m,n}(k) \\
&= (I_{m,n}(k), K_{m,n}(k)) \left(
\begin{array}{c}
e_{m,n}^{(1)}(k) \\ e_{m,n}^{(2)}(k)
\end{array}\right) = \left(
\begin{array}{cc}
I_{m,n}^{(1)}(k) & K_{m,n}^{(1)}(k) \\
I_{m,n}^{(2)}(k) & K_{m,n}^{(2)}(k)
\end{array}\right)\left(
\begin{array}{c}
e_{m,n}^{(1)}(k) \\ e_{m,n}^{(2)}(k)
\end{array}\right).
\end{aligned}
\end{equation*}

Inserting the above formulas for the coefficients $e_{m,n}^{(1)}$, $e_{m,n}^{(2)}$ we get:

\begin{equation*}
\begin{aligned}
&Q^{(m,n)}r_n(k) = \frac{1}{\tau_{m,n}} \left(
\begin{array}{cc}
I_{m,n}^{(1)}(k) & K_{m,n}^{(1)}(k) \\
I_{m,n}^{(2)}(k) & K_{m,n}^{(2)}(k)
\end{array}\right)\times\\
&\left(
\begin{array}{cc}
\sum_{i=k+1}^\infty \prod_{j=0}^{i-1}\frac{c_{1,n}(j)}{c_{2,n}(j)}K_{m,n}^{(2)}(i) & -\sum_{i=k+1}^\infty \prod_{j=0}^{i-1}\frac{c_{1,n}(j)}{c_{2,n}(j)}K_{m,n}^{(1)}(i) \\
\sum_{i=0}^k \prod_{j=0}^{i-1}\frac{c_{1,n}(j)}{c_{2,n}(j)}I_{m,n}^{(2)}(i) & -\sum_{i=0}^k \prod_{j=0}^{i-1}\frac{c_{1,n}(j)}{c_{2,n}(j)}I_{m,n}^{(1)}(i)
\end{array}\right) \left(A_{m,n}(i)\right)^{-1}r(i).
\end{aligned}
\end{equation*}
Next we focus on multiplying the second matrix with $\left(A_{m,n}(i)\right)^{-1}$.   We use the recurrence relations, (\ref{recurrence_1}) and (\ref{recurrence_2}), to get: 

%

\begin{equation}\label{Q_through_KandI}
\begin{aligned}
&Q^{(m,n)}r_n(k) = \frac{1}{\tau_{m,n}} \left(
\begin{array}{cc}
I_{m,n}^{(1)}(k) & K_{m,n}^{(1)}(k) \\
I_{m,n}^{(2)}(k) & K_{m,n}^{(2)}(k)
\end{array}\right)\times\\
&\left(
\begin{array}{cc}
\sum_{i=k+1}^\infty \prod_{j=0}^{i-2}\frac{c_{1,n}(j)}{c_{2,n}(j)}\frac{1}{a_{n+1}(i-1)}K_{m,n}^{(2)}(i-1) & \sum_{i=k+1}^\infty \prod_{j=0}^{i-1}\frac{c_{1,n}(j)}{c_{2,n}(j)}\frac{1}{a_{n}(i)}K_{m,n}^{(1)}(i) \\
\sum_{i=0}^k \prod_{j=0}^{i-2}\frac{c_{1,n}(j)}{c_{2,n}(j)}\frac{1}{a_{n+1}(i-1)}I_{m,n}^{(2)}(i-1) & \sum_{i=0}^k \prod_{j=0}^{i-1}\frac{c_{1,n}(j)}{c_{2,n}(j)}\frac{1}{a_{n}(i)}I_{m,n}^{(1)}(i)
\end{array}\right)r(i).
\end{aligned}
\end{equation}

This formula finally expresses $Q^{(m,n)}$ through the quantum $I$ and $K$ functions. Its primary benefit is that it shows that the entries the matrix of  $Q^{(m,n)}$
are integral operators in weighted $\ell^2$ spaces. Below we explicitly write down those operators.

For $\alpha,\beta =1,2$, $m\neq0$ and $n\ge0$, we define the following integral operators  $X_{m,n}^{\alpha\beta} , Y_{m,n}^{\alpha\beta} : \ell_{a_{n-1+\beta}}^2(\Z_{\ge0}) \to \ell_{a_{n-1+\alpha}}^2(\Z_{\ge0})$ by:

\begin{equation}\label{int_ops_m_not_zero}
\begin{aligned}
&X_{m,n}^{\alpha\beta}r(k) = I_{m,n}^{(\alpha)}(k) \sum_{i=k+1}^\infty \left(\prod_{j=0}^{i-\beta}\frac{c_{1,n}(j)}{c_{2,n}(j)}\right)\frac{K_{m,n}^{(\beta)}(i-\beta+1)}{a_{n-1+\beta}(i-\beta+1)}r(i) \\
&Y_{m,n}^{\alpha\beta}r(k) = K_{m,n}^{(\alpha)}(k) \sum_{i=0}^k \left(\prod_{j=0}^{i-\beta}\frac{c_{1,n}(j)}{c_{2,n}(j)}\right)\frac{I_{m,n}^{(\beta)}(i-\beta+1)}{a_{n-1+\beta}(i-\beta+1)}r(i).
\end{aligned}
\end{equation}
Also, for $n\ge0$, define integral operator $Z_{0,n}:\ell_{a_{n}}^2(\Z_{\ge0}) \to \ell_{a_{n+1}}^2(\Z_{\ge0})$
by

\begin{equation}\label{int_ops_m_zero}
Z_{0,n} r(k) = \sum_{i=0}^k \left(\prod_{j=i}^{k-1}c_{2,n}(j)\right) \frac{r(i)}{a_{n}(i)}.
\end{equation}

\begin{lem}\label{para_equiv_form}
The parametrix, $Q^{(m,n)}$ for the operator $\mathcal{A}_{m,n}$ from above for $m\neq 0$ and $n\ge0$ is given by the following equivalent formula:

\begin{equation*}
Q^{(m,n)}\left(
\begin{array}{c}
r^{(1)}(k) \\ r^{(2)}(k)
\end{array}\right) = \frac{1}{\tau_{m,n}}\left(
\begin{array}{c}
p_{m,n}^{(1)}(k) \\ p_{m,n}^{(2)}(k)
\end{array}\right),
\end{equation*}
where

\begin{equation*}
\begin{aligned}
&p_{m,n}^{(1)}(k) = X_{m,n}^{12}r^{(1)}(k) + Y_{m,n}^{12}r^{(1)}(k) + X_{m,n}^{11}r^{(2)}(k) + Y_{m,n}^{11}r^{(2)}(k) \\
&p_{m,n}^{(2)}(k) = X_{m,n}^{22}r^{(1)}(k) + Y_{m,n}^{22}r^{(1)}(k) + X_{m,n}^{21}r^{(2)}(k) + Y_{m,n}^{21}r^{(2)}(k).
\end{aligned}
\end{equation*}
Moreover when $m=0$ and $n\ge0$ the parametrix is given by the following formula:

\begin{equation*}
Q^{(0,n)}\left(
\begin{array}{c}
r^{(1)}(k) \\ r^{(2)}(k)
\end{array}\right) = \left(
\begin{array}{c}
0 \\ Z_{0,n}r^{(2)}(i)
\end{array}\right).
\end{equation*}
\end{lem}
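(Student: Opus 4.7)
The plan is to start directly from the matrix-form expression (\ref{Q_through_KandI}) already derived for $Q^{(m,n)}r(k)$ when $m \neq 0$, carry out the matrix multiplication against $r = (r^{(1)}, r^{(2)})^t$ explicitly, and identify each of the eight resulting scalar terms with one of the operators $X_{m,n}^{\alpha\beta}$ or $Y_{m,n}^{\alpha\beta}$ defined in (\ref{int_ops_m_not_zero}). The $m = 0$ statement is read off at once from the second clause of (\ref{Q_def}).

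First I would rewrite the four entries of the inner matrix in (\ref{Q_through_KandI}) in a unified way using $\beta \in \{1,2\}$: the left-column entries, carrying the factor $\prod_{j=0}^{i-2}\frac{c_{1,n}(j)}{c_{2,n}(j)}\frac{1}{a_{n+1}(i-1)}$ with index $i-1$, match $\beta = 2$ in the definitions of $X_{m,n}^{\alpha 2}$ and $Y_{m,n}^{\alpha 2}$ under the substitution $i - \beta + 1 = i - 1$, $i - \beta = i - 2$, $n - 1 + \beta = n + 1$; the right-column entries, carrying $\prod_{j=0}^{i-1}\frac{c_{1,n}(j)}{c_{2,n}(j)}\frac{1}{a_n(i)}$ with index $i$, match $\beta = 1$ analogously. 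The outer matrix in (\ref{Q_through_KandI}) supplies the correct prefactors: $I_{m,n}^{(\alpha)}(k)$ is the $X$-type prefactor for row $\alpha$, and $K_{m,n}^{(\alpha)}(k)$ is the $Y$-type prefactor for row $\alpha$.

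Carrying out the product row by row, the top component of $\tau_{m,n}Q^{(m,n)}r(k)$ becomes $X_{m,n}^{12}r^{(1)}(k) + Y_{m,n}^{12}r^{(1)}(k) + X_{m,n}^{11}r^{(2)}(k) + Y_{m,n}^{11}r^{(2)}(k)$, which is $p_{m,n}^{(1)}(k)$; the bottom component differs only by replacing $\alpha = 1$ with $\alpha = 2$ everywhere, yielding $p_{m,n}^{(2)}(k)$. For $m = 0$, the only nonzero block of the matrix kernel in the second clause of (\ref{Q_def}) is its $(2,2)$ entry, which coincides exactly with the kernel defining $Z_{0,n}$ in (\ref{int_ops_m_zero}); the first row vanishes, producing the asserted form of $Q^{(0,n)}$.

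The main obstacle, though not conceptually deep, is bookkeeping: the shifts from $i$ to $i - 1$ appearing inside (\ref{Q_through_KandI}) come from applying the recurrence relations (\ref{recurrence_1}) and (\ref{recurrence_2}) during the multiplication by $(A_{m,n}(i))^{-1}$, and these have to be matched precisely with the $i - \beta + 1$ shifts built into $X_{m,n}^{\alpha\beta}$ and $Y_{m,n}^{\alpha\beta}$. A useful consistency check along the way is that each $X_{m,n}^{\alpha\beta}$ and $Y_{m,n}^{\alpha\beta}$ maps $\ell_{a_{n-1+\beta}}^2(\Z_{\ge 0})$ into $\ell_{a_{n-1+\alpha}}^2(\Z_{\ge 0})$, so that with $r^{(1)} \in \ell_{a_{n+1}}^2$ and $r^{(2)} \in \ell_{a_n}^2$ the components $p_{m,n}^{(1)}$ and $p_{m,n}^{(2)}$ land in $\ell_{a_n}^2$ and $\ell_{a_{n+1}}^2$ respectively, as required for $Q^{(m,n)}$ to be the inverse of $\mathcal{A}_{m,n}$.
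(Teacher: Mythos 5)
Your proposal is correct and follows essentially the same route as the paper: multiply out the two matrices in the already-derived expression for $Q^{(m,n)}r(k)$, identify the eight resulting terms with the integral operators $X_{m,n}^{\alpha\beta}$ and $Y_{m,n}^{\alpha\beta}$, and read off the $m=0$ case directly from the second clause of the formula for $Q^{(m,n)}$. Your extra care with the index shifts and the $\beta$-bookkeeping is sound and just makes explicit what the paper leaves to the reader.
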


\begin{proof}
Multiplying out the two matrices in (\ref{Q_through_KandI}) and applying it to the vector $r(i) = \left(r^{(1)}(i),r^{(2)}(i)\right)$, and using the definitions of the integral operators, equation (\ref{int_ops_m_not_zero}), gives the desired result.   The case $m=0$ immediately follows from equations (\ref{Q_def}) and (\ref{int_ops_m_zero}).   Thus the proof is complete.
\end{proof}

\section{Analysis of the Parametrix}
This section contains analysis of the parametrix $Q^{(m,n)}$, culminating in the proof of the main theorem \ref{quant_Q_theorem}. 

The case $m=0$ is the easiest: all matrices are diagonal and the analysis is not any different than the analysis for the quantum disk \cite{KM1} and \cite{KM3}. 
Below we only consider the case $m>0$ since for $m<0$ the computations are virtually identical.   We start by gathering the basic information about the quantum $I$ and $K$ functions.

\begin{lem}
For $m>0$ the components $-I_{m,n}^{(1)}(k)$, $I_{m,n}^{(2)}(k)$, $K_{m,n}^{(1)}(k)$, and $K_{m,n}^{(2)}(k)$ are all positive for all $n,k\ge0$.
\end{lem}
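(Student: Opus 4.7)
The plan is to prove the four positivity claims by reading off signs from the explicit structure of the matrix recursion, leveraging facts already established in Proposition \ref{C_structure} and in the proof of Proposition \ref{linear_independ}. The $I$-side and the $K$-side require two different representations, so I will treat them separately.

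For the components of $I_{m,n}(k)$, I would invoke the polynomial expansion of $\prod_{i=0}^{k-1} C_{m,n}(i)$ from Proposition \ref{C_structure}: every matrix entry is a sum of a product of the positive numbers $c_{j,n}(i)$ and a polynomial in $m^2$ with strictly positive coefficients. Applying this product to the normalization vector $I_{m,n}(0) = (-1,\, m/a_n(0))^t$ and using $m > 0$, one reads off directly that $I_{m,n}^{(1)}(k)$ is a strictly negative sum of positive quantities, whereas $I_{m,n}^{(2)}(k)$ is a strictly positive sum. This reproduces the bookkeeping already carried out inside the proof of Proposition \ref{linear_independ}, now formalized as a standalone statement.

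For the components of $K_{m,n}(k)$, I would switch to the equivalent representation
\begin{equation*}
K_{m,n}(k) = \left(\prod_{i=k}^{\infty} C_{m,n}(i)\right)^{-1} K_{m,n}(\infty),
\end{equation*}
which is well defined by Proposition \ref{C_existence} and invertibility of the infinite product. The key computation is to show that for each $i$ the inverse $C_{m,n}(i)^{-1}$ has all entries strictly positive when $m > 0$. Indeed, $\det C_{m,n}(i) = c_{2,n}(i)/c_{1,n}(i) > 0$, and applying Cramer's rule to formula (\ref{C_matrix}) expresses each entry of $C_{m,n}(i)^{-1}$ as a sum of products of the positive quantities $c_{1,n}(i)$, $c_{2,n}(i)$, $a_{n+1}(i)$, $a_n(i+1)$ and $m$. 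A finite product of entrywise-positive matrices is entrywise positive; passing to the limit preserves positivity. Since case 1 of (\ref{nc_K_conditions}) forces $K_{m,n}^{(1)}(\infty), K_{m,n}^{(2)}(\infty) > 0$ when $m > 0$, applying an entrywise-positive matrix to a strictly positive vector yields $K_{m,n}^{(1)}(k), K_{m,n}^{(2)}(k) > 0$ for every $k \geq 0$.

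The main subtlety, and the one I would write out carefully, is the passage to the inverse of the infinite product: one must check that the order of matrix multiplication is compatible with the representation above and that strict positivity survives the entrywise limit. Once entrywise positivity of $\prod_{i=k}^\infty C_{m,n}(i)^{-1}$ is established by induction on finite truncations together with the convergence supplied by Proposition \ref{C_existence}, the four positivity statements follow in a single line. Aside from this, everything reduces to sign inspection in the two polynomial or product-of-positives expansions.
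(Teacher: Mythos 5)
Your proof is correct and follows essentially the same route as the paper: the paper's one-line proof simply defers to the sign bookkeeping already carried out in the proof of Proposition \ref{linear_independ}, and that bookkeeping is exactly what you have spelled out, namely the polynomial-in-$m^2$ expansion of $\prod_{i=0}^{k-1}C_{m,n}(i)$ applied to $I_{m,n}(0)$ for the $I$-components, and the entrywise-positive representation $K_{m,n}(k)=\bigl(\prod_{i=k}^{\infty}C_{m,n}(i)\bigr)^{-1}K_{m,n}(\infty)$ for the $K$-components.
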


\begin{proof}
This immediately follows from the definitions and the proof of Proposition \ref{linear_independ}.
\end{proof}

\begin{lem}\label{special_soln_ineq_1}
We have the following inequalities for all $n,k\ge0$ and $m>0$:

\begin{equation*}
\begin{aligned}
&-I_{m,n}^{(1)}(k) < -I_{m,n}^{(1)}(k+1), && I_{m,n}^{(2)}(k) < \frac{1}{c_{2,n}(k)}I_{m,n}^{(2)}(k+1), \\
&K_{m,n}^{(1)}(k+1) < \frac{1}{c_{1,n}(k)}K_{m,n}^{(1)}(k), && K_{m,n}^{(2)}(k+1) < K_{m,n}^{(2)}(k).
\end{aligned}
\end{equation*}
\end{lem}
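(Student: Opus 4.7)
The plan is to derive each of the four inequalities directly from the recurrence relations (\ref{recurrence_1}) and (\ref{recurrence_2}), combined with the sign information from the preceding lemma, which tells us that for $m > 0$ we have $-I_{m,n}^{(1)}(k) > 0$, $I_{m,n}^{(2)}(k) > 0$, $K_{m,n}^{(1)}(k) > 0$, and $K_{m,n}^{(2)}(k) > 0$, together with the standing hypothesis $c_{1,n}(k), c_{2,n}(k) \le 1$.

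First I would handle the second inequality $I_{m,n}^{(2)}(k) < \tfrac{1}{c_{2,n}(k)}I_{m,n}^{(2)}(k+1)$. Setting $H = I$ in the first equation of (\ref{recurrence_2}) gives
\begin{equation*}
I_{m,n}^{(2)}(k) - \frac{1}{c_{2,n}(k)}I_{m,n}^{(2)}(k+1) = \frac{m}{a_n(k+1)c_{2,n}(k)}I_{m,n}^{(1)}(k+1),
\end{equation*}
and the right side is negative since $m > 0$ and $I_{m,n}^{(1)}(k+1) < 0$. The third inequality is analogous: setting $H = K$ in the first equation of (\ref{recurrence_1}) yields
\begin{equation*}
K_{m,n}^{(1)}(k+1) - \frac{1}{c_{1,n}(k)}K_{m,n}^{(1)}(k) = -\frac{m}{a_{n+1}(k)c_{2,n}(k)}K_{m,n}^{(2)}(k),
\end{equation*}
whose right side is negative because $K_{m,n}^{(2)}(k) > 0$.

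For the fourth inequality, I would start again from the first equation of (\ref{recurrence_2}) with $H = K$, rearranged as
\begin{equation*}
K_{m,n}^{(2)}(k+1) = c_{2,n}(k)K_{m,n}^{(2)}(k) - \frac{m}{a_n(k+1)}K_{m,n}^{(1)}(k+1),
\end{equation*}
so that $K_{m,n}^{(2)}(k+1) < c_{2,n}(k)K_{m,n}^{(2)}(k) \le K_{m,n}^{(2)}(k)$ using $c_{2,n}(k) \le 1$ and positivity of $K^{(1)}$. The first inequality is handled similarly from the first equation of (\ref{recurrence_1}) with $H = I$, namely
\begin{equation*}
I_{m,n}^{(1)}(k+1) = \frac{1}{c_{1,n}(k)}I_{m,n}^{(1)}(k) - \frac{m}{a_{n+1}(k)c_{2,n}(k)}I_{m,n}^{(2)}(k);
\end{equation*}
since $I_{m,n}^{(1)}(k) < 0$ and $1/c_{1,n}(k) \ge 1$, the first term is $\le I_{m,n}^{(1)}(k)$, while the second term is strictly negative because $I_{m,n}^{(2)}(k) > 0$, giving $I_{m,n}^{(1)}(k+1) < I_{m,n}^{(1)}(k)$, i.e.\ $-I_{m,n}^{(1)}(k) < -I_{m,n}^{(1)}(k+1)$.

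There is no real obstacle here: each inequality reduces to a one-line sign argument once the appropriate recurrence is selected. The only subtlety is picking from the two equivalent systems (\ref{recurrence_1}) and (\ref{recurrence_2}) the version in which the right-hand side consists of a single signed term whose sign is already known, so that no cross-cancellation between the two contributions in the second equation of each system needs to be analyzed.
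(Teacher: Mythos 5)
Your proof is correct and follows essentially the same route as the paper: in each case you select the first equation of (\ref{recurrence_1}) or (\ref{recurrence_2}), whose right-hand side is a single term of known sign by the positivity lemma, and the strict inequality drops out immediately, with $c_{i,n}(k)\le 1$ supplying the final comparison. The paper spells this out only for the two $K$-inequalities and asserts that the $I$-case is identical; you have simply written out all four.
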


\begin{proof} 
Using recurrence relation (\ref{recurrence_1}), (\ref{recurrence_2}), the assumption that $m>0$ and the previous lemma we have

\begin{equation*}
0< \frac{m}{a_{n}(k+1)}K_{m,n}^{(1)}(k+1) = c_{2,n}(k)K_{m,n}^{(2)}(k) - K_{m,n}^{(2)}(k+1).
\end{equation*}
Consequently, using $c_{2,n}(k)\le 1$ , we have

\begin{equation*}
K_{m,n}^{(2)}(k+1) < c_{2,n}(k)K_{m,n}^{(2)}(k) \le K_{m,n}^{(2)}(k),
\end{equation*}
proving one of the inequalities.   Similarly, we have

\begin{equation*}
0< \frac{m}{a_{n+1}(k)c(k)}K_{m,n}^{(2)}(k) = \frac{1}{c_{1,n}(k)}K_{m,n}^{(1)}(k) - K_{m,n}^{(1)}(k+1),
\end{equation*}
implying the other inequality involving $K_{m,n}$.   The proofs for the $I_{m,n}(k)$ are identical. 
\end{proof}

The next lemma is the crux of the compactness argument. It establishes estimates on the components of  quantum $I$ and $K$ functions in a similar fashion to the estimates of the modified Bessel functions in  \cite{KM4}.   Consider the quantity: 

\begin{equation*}
\varepsilon(m,n) = \sum_{k=0}^\infty \frac{a_{n+1}(k)}{m^2 + a_{n}(k)a_{n+1}(k)}.
\end{equation*}
The series above is convergent by summability assumptions on $a_n(k)$.

\begin{lem}\label{special_soln_ineq_2}
For $m\neq0$ and $n,k\ge0$ the following inequalities hold:

\begin{equation*}
I_{m,n}^{(2)}(k) \le -|m|\varepsilon(m,n)I_{m,n}^{(1)}(k+1)\quad\textrm{and}\quad K_{m,n}^{(1)}(k+1) \le |m|\left(\varepsilon(m,n) + \frac{K_{m,n}^{(1)}(\infty)}{K_{m,n}^{(2)}(\infty)}\right)K_{m,n}^{(2)}(k).
\end{equation*}
Moreover $\varepsilon(m,n)\to 0$ as $|m|,n\to\infty$.
\end{lem}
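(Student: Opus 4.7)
My plan is to prove each of the two inequalities by induction on $k$ (forward for $I$, backward for $K$), working with the ratios $\gamma_k := I_{m,n}^{(2)}(k)/(-I_{m,n}^{(1)}(k+1))$ and $\tilde\gamma_k := K_{m,n}^{(1)}(k+1)/K_{m,n}^{(2)}(k)$, which are strictly positive for $m>0$ by the previous lemma. The underlying observation is that both recurrences (\ref{recurrence_1}) and (\ref{recurrence_2}) can be combined into a single-step estimate of the form (ratio at next index) $\le$ (ratio at current index) $+\, m\cdot$(new term), where the new term is exactly one summand of $\varepsilon(m,n)$. The $\varepsilon(m,n)\to 0$ claim is handled separately by dominated convergence.

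For the $I$ inequality, the base case $k=0$ is a direct computation from the normalizations $I^{(1)}_{m,n}(0)=-1$, $I^{(2)}_{m,n}(0)=m/a_n(0)$ together with one step of (\ref{recurrence_1}), giving $\gamma_0 = c_{1,n}(0)\,m\,a_{n+1}(0)/(a_n(0)a_{n+1}(0)+m^2) \le m B_0$, where $B_j := a_{n+1}(j)/(m^2+a_n(j)a_{n+1}(j))$. For the inductive step I would substitute one line of (\ref{recurrence_1}) into the other to express $\gamma_{k+1}$ as a rational function of $\gamma_k$ in which the crucial $m^2$ appears in the denominator; then using $c_{1,n}(k),c_{2,n}(k)\le 1$ to upper-bound the numerator and dropping the positive cross term in the denominator yields
$$\gamma_{k+1}\;\le\;\gamma_k\cdot\frac{a_n(k+1)a_{n+1}(k+1)}{m^2+a_n(k+1)a_{n+1}(k+1)}\;+\;mB_{k+1}\;\le\;\gamma_k+mB_{k+1}.$$
Telescoping gives $\gamma_k\le m\sum_{j=0}^{k}B_j\le m\varepsilon(m,n)$, which is the first stated inequality.

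For the $K$ inequality, the same scheme is run \emph{backwards} from $k=\infty$. The base case is $\tilde\gamma_\infty = K^{(1)}(\infty)/K^{(2)}(\infty)=:\mu$, and since $m$ is a nonzero integer (hence $m\ge 1$ in the case $m>0$), we immediately have $\tilde\gamma_\infty\le m(\varepsilon(m,n)+\mu)$. The backward step is obtained by eliminating between (\ref{recurrence_1}) and (\ref{recurrence_2}) applied to $K$, producing a rational relation between $\tilde\gamma_{k-1}$ and $\tilde\gamma_k$ whose denominator again contains $m^2+a_n(k)a_{n+1}(k)$; the same two estimates (factors of $c$ bounded by $1$, dropping a positive term) yield $\tilde\gamma_{k-1}-\tilde\gamma_k\le m\tilde B_k$ with $\tilde B_k$ sharing the denominator of $B_k$. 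Backward telescoping from $\infty$ then gives $\tilde\gamma_k\le\mu+m\sum_{j>k}\tilde B_j\le m(\mu+\varepsilon(m,n))$, which is the second stated inequality after one uses $m\ge 1$ to absorb $\mu$ into $m\mu$.

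For the limit statement, the elementary bound $a_{n+1}(k)/(m^2+a_n(k)a_{n+1}(k))\le 1/a_n(k)$ gives $\varepsilon(m,n)\le s(n)$, so $\varepsilon(m,n)\to 0$ as $n\to\infty$ uniformly in $m$. For each fixed $n$, the same bound serves as a summable dominator while each term tends to $0$ as $|m|\to\infty$, so dominated convergence yields $\varepsilon(m,n)\to 0$ as $|m|\to\infty$; a standard two-parameter argument (first pick $n$ large enough that $s(n)$ is small, then use the $|m|\to\infty$ statement for each of the finitely many remaining small $n$) gives the joint limit. The main obstacle is engineering the single-step estimate so that the $m^2$ factor appears in the denominator, rather than merely $a_n(k)a_{n+1}(k)$: a naive use of either recurrence alone produces a weaker bound involving only $s(n)$ rather than $\varepsilon(m,n)$, and the sharper $\varepsilon$ bound requires the cross-substitution of the two recurrences.
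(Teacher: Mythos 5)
Your proposal follows essentially the same route as the paper's proof: you pass to the ratios $I_{m,n}^{(2)}(k)/(-I_{m,n}^{(1)}(k+1))$ and $K_{m,n}^{(1)}(k+1)/K_{m,n}^{(2)}(k)$, eliminate between the two recurrence relations to obtain a one-step increment bounded by the $k$-th summand $a_{n+1}(k)/(m^2+a_n(k)a_{n+1}(k))$ of $\varepsilon(m,n)$, and then telescope forward for $I$ and backward from infinity for $K$, just as the paper does via its continued-fraction identity; the $\varepsilon(m,n)\to 0$ argument via the bound $\varepsilon(m,n)\le s(n)$ together with a finite-segment/large-$|m|$ splitting is also the paper's. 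The only minor deviation is that your backward telescope gives the slightly sharper $\tilde\gamma_k\le \mu+|m|\varepsilon(m,n)$ which you then weaken using $|m|\ge 1$, but this matches what the paper proves.
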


\begin{proof}
Assuming $m>0$ and using the recurrence relations (\ref{recurrence_1}), (\ref{recurrence_2}), we have

\begin{equation*}
\begin{aligned}
\frac{I_{m,n}^{(2)}(k)}{-I_{m,n}^{(1)}(k+1)} &= \frac{c_{1,n}(k)}{\frac{m}{a_{n+1}(k)} + \frac{1}{\frac{m}{a_{n}(k)} + c_{2,n}(k-1)\frac{I_{m,n}^{(2)}(k-1)}{-I_{m,n}^{(1)}(k)}}} \\
&\le c_{1,n}(k)\left(\frac{\frac{m}{a_{n}(k)}}{1+\frac{m^2}{a_{n}(k)a_{n+1}(k)}} + c_{2,n}(k-1)\frac{I_{m,n}^{(2)}(k-1)}{-I_{m,n}^{(1)}(k)}\right) \\
&\le \frac{\frac{m}{a_{n}(k)}}{1+\frac{m^2}{a_{n}(k)a_{n+1}(k)}} + \frac{I_{m,n}^{(2)}(k-1)}{-I_{m,n}^{(1)}(k)},
\end{aligned}
\end{equation*}
since  $c_{1,n}(k)\le 1$ and $c_{2,n}(k)\le 1$ for all $k\geq 0$, assuming $I_{m,n}^{(2)}(-1)=0$.   It follows, by rearranging the terms, that

\begin{equation*}
\frac{1}{m}\left(\frac{I_{m,n}^{(2)}(k)}{-I_{m,n}^{(1)}(k+1)} - \frac{I_{m,n}^{(2)}(k-1)}{-I_{m,n}^{(1)}(k)}\right) \le \frac{\frac{1}{a_{n}(k)}}{1+ \frac{m^2}{a_{n}(k)a_{n+1}(k)}}.
\end{equation*}
Summing both sides and telescoping the left side we get

\begin{equation*}
\frac{1}{m}\left(\frac{I_{m,n}^{(2)}(k)}{-I_{m,n}^{(1)}(k+1)}\right) \le \sum_{k=0}^\infty \frac{\frac{1}{a_{n}(k)}}{1+ \frac{m^2}{a_{n}(k)a_{n+1}(k)}} = \varepsilon(m,n) .
\end{equation*}
This establishes the first inequality.   To obtain the second inequality we proceed similarly, using the recurrence relations (\ref{recurrence_1}), (\ref{recurrence_2}), but this time telescoping sum goes to infinity.

The next step is to show $\varepsilon(m,n)$ goes to zero as $m$ and $n$ go to infinity.  It follows immediately from the definition that:

\begin{equation*}
\varepsilon(m,n) \le \sum_{k=0}^\infty \frac{1}{a_{n}(k)},
\end{equation*}
and the sum on the right goes to zero as $n\to\infty$ from the condition on $a_{n}(k)$, and hence $\varepsilon(m,n)\to 0$ as $n\to\infty$.   Now for any $\eta>0$ pick $N>0$ such that we have

\begin{equation*}
\sum_{k>N} \frac{1}{a_{n}(k)} \le \frac{\eta}{2},
\end{equation*}
and pick $M>0$ such that

\begin{equation*}
\sum_{k\le N} \frac{\frac{1}{a_{n}(k)}}{1 + \frac{m^2}{a_{n}(k)a_{n+1}(k)}} \le \frac{\eta}{2}
\end{equation*}
for $m>M$.   It now follows that $\varepsilon(m,n)\to 0$ as $n,m\to\infty$.   The case $m<0$  is analogous, and thus the desired result follows.
\end{proof}

The next technical lemma deals with sums appearing in the definitions of the integral operators which comprise the parametrix.

\begin{lem}\label{sum_trick}
The following summation estimates are true for $m\neq0$ and $n,k\ge0$:

\begin{equation*}
\sum_{i=k+1}^\infty \left(\prod_{j=0}^{i-2}c_{1,n}(j)\right)\frac{K_{m,n}^{(2)}(i-1)}{a_{n+1}(i-1)} \le \frac{1}{|m|}\prod_{j=0}^{k-1} c_{1,n}(j)K_{m,n}^{(1)}(k)\ \textrm{and}\ \sum_{i=k+1}^\infty \frac{K_{m,n}^{(1)}(i)}{a_{n}(i)} \le \frac{1}{|m|}K_{m,n}^{(2)}(k).
\end{equation*}
\end{lem}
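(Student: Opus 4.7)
The plan is to derive both inequalities by telescoping suitable weighted first differences of the components of $K_{m,n}$, produced directly from the matrix recurrence $K_{m,n}(k+1) = C_{m,n}(k)K_{m,n}(k)$ and its inverse form. In accordance with the convention at the start of this section I will carry out the argument for $m>0$; the case $m<0$ is handled by mirror-image signs using (\ref{nc_K_conditions}), which is what justifies the $|m|$ in the statement.

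For the second inequality, the first row of $K_{m,n}(k) = C_{m,n}(k)^{-1}K_{m,n}(k+1)$, i.e.\ the first line of (\ref{recurrence_2}), multiplied through by $c_{2,n}(k)$ gives the exact identity
\begin{equation*}
c_{2,n}(k)K_{m,n}^{(2)}(k) - K_{m,n}^{(2)}(k+1) = \frac{m}{a_{n}(k+1)}K_{m,n}^{(1)}(k+1).
\end{equation*}
Since $c_{2,n}(k)\le 1$ and $K_{m,n}^{(2)}(k)\ge 0$, this becomes the one-sided bound $K_{m,n}^{(2)}(k) - K_{m,n}^{(2)}(k+1) \ge \frac{m}{a_{n}(k+1)}K_{m,n}^{(1)}(k+1)$. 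Telescoping from $k$ to $\infty$ and discarding the non-negative boundary term $K_{m,n}^{(2)}(\infty)\ge 0$ immediately yields the claimed estimate on the second sum.

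For the first inequality, the appropriate bookkeeping is to telescope the weighted quantity
\begin{equation*}
P(k) := \prod_{j=0}^{k-1} c_{1,n}(j)\,K_{m,n}^{(1)}(k).
\end{equation*}
The top row of $K_{m,n}(k+1) = C_{m,n}(k)K_{m,n}(k)$, after multiplication by $c_{1,n}(k)$, produces
\begin{equation*}
K_{m,n}^{(1)}(k) - c_{1,n}(k)K_{m,n}^{(1)}(k+1) = \frac{m}{a_{n+1}(k)}K_{m,n}^{(2)}(k),
\end{equation*}
and multiplying by $\prod_{j=0}^{k-1}c_{1,n}(j)$ gives the exact telescopic identity
\begin{equation*}
P(k) - P(k+1) = m\prod_{j=0}^{k-1}c_{1,n}(j)\,\frac{K_{m,n}^{(2)}(k)}{a_{n+1}(k)}.
\end{equation*}
Summing from $k$ to $\infty$ and re-indexing $i\mapsto i-1$ reproduces exactly the sum on the left-hand side of the first inequality. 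Monotonicity of $K_{m,n}^{(1)}$ (Lemma \ref{special_soln_ineq_1}), positivity of $K_{m,n}^{(1)}(\infty)$ from (\ref{nc_K_conditions}), and convergence of $\prod_j c_{1,n}(j)$ (Proposition \ref{C_existence}) together give $P(\infty)\ge 0$, so $P(k)\ge P(k)-P(\infty) = m\cdot(\text{sum})$, which is the desired bound.

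The main step to verify is essentially sign bookkeeping: the inequality $c_{i,n}\le 1$ is used to pass from each exact identity to a one-sided inequality in the correct direction, and the boundary term discarded at infinity must be non-negative. Both are guaranteed by Lemma \ref{special_soln_ineq_1} and the sign conditions (\ref{nc_K_conditions}). The case $m<0$ runs in parallel using the mirror-image sign conventions, producing the same estimates with $|m|$ on the right-hand side.
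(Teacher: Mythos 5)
Your proposal is correct and takes essentially the same route as the paper: both arguments multiply the scalar recurrence relations (\ref{recurrence_1})--(\ref{recurrence_2}) by $c_{1,n}(k)$ (resp.\ $c_{2,n}(k)$) and a running product of $c_{1,n}$'s, then telescope and discard the non-negative boundary term at infinity. The only difference is cosmetic: you introduce the auxiliary quantity $P(k)$ and spell out the sign bookkeeping explicitly, whereas the paper telescopes inline.
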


\begin{proof}
As usual we consider only $m>0$.    
Using equation (\ref{recurrence_1}), multiplying both sides of the equation by a product of $c_{1,n}(j)$'s, and summing we get:

\begin{equation*}
\begin{aligned}
\sum_{i=k+1}^\infty \left(\prod_{j=0}^{i-2}c_{1,n}(j)\right)\frac{K_{m,n}^{(2)}(i-1)}{a_{n+1}(i-1)} &=\frac{1}{m}\sum_{i=k+1}^\infty \left(\prod_{j=0}^{i-2} c_{1,n}(j)K_{m,n}^{(1)}(i-1) - \prod_{j=0}^{i-1}c_{1,n}(j)K_{m,n}^{(1)}(i)\right) \\
&\le \frac{1}{m}\prod_{j=0}^{k-1}c_{1,n}(j)K_{m,n}^{(1)}(k),
\end{aligned}
\end{equation*}
where the last inequality is true because the difference above is a telescoping sum. The second inequality follows from the recurrence relation  (\ref{recurrence_2}) and the same telescoping sum trick.   This completes the proof.
\end{proof}

The next and final lemma in preparation for the proof of the main result deals with pointwise estimates of the products of different components of quantum $I$ and $K$ functions through the quantity $\tau_{m,n}= \langle K_{m,n}(0), I_{m,n}(0)^\perp\rangle$.

\begin{lem}\label{special_sol_bnd}
 If $m\neq0$ then, for all $n\ge 0$,

\begin{equation*}
\begin{aligned}
K_{m,n}^{(1)}(k)I_{m,n}^{(2)}(k) \leq \tau_{m,n} \prod_{i=0}^{k-1}\frac{c_{2,n}(i)}{c_{1,n}(i)} &\quad\textrm{and}\quad -K_{m,n}^{(2)}(k)I_{m,n}^{(1)}(k) \leq \tau_{m,n} \prod_{i=0}^{k-1}\frac{c_{2,n}(i)}{c_{1,n}(i)} \\
-I_{m,n}^{(1)}(k+1)K_{m,n}^{(2)}(k) \le \frac{\tau_{m,n}}{c_{1,n}(k)}\prod_{i=0}^{k-1}\frac{c_{2,n}(i)}{c_{1,n}(i)} &\quad\textrm{and}\quad I_{m,n}^{(2)}(k)K_{m,n}^{(1)}(k+1)\le \frac{\tau_{m,n}}{c_{1,n}(k)}\prod_{i=0}^{k-1}\frac{c_{2,n}(i)}{c_{1,n}(i)} .
\end{aligned}
\end{equation*}
\end{lem}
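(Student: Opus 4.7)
The plan is to establish all four inequalities as simple consequences of a single Wronskian-type identity together with the sign information provided by the preceding lemma and by Proposition \ref{linear_independ}. First I would compute
\[
\langle K_{m,n}(k), I_{m,n}(k)^\perp\rangle \;=\; K_{m,n}^{(1)}(k)I_{m,n}^{(2)}(k) - K_{m,n}^{(2)}(k)I_{m,n}^{(1)}(k)
\]
directly from the definitions. Using formula (\ref{nc_special_soln_K}) for $K_{m,n}(k)$ together with formula (\ref{special_soln_perp}) for $I_{m,n}(k)^\perp$, the product of $\prod_{i=0}^{k-1}C_{m,n}(i)$ with its transposed inverse collapses inside the inner product, leaving behind only the determinantal factor. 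This yields the identity
\[
K_{m,n}^{(1)}(k)I_{m,n}^{(2)}(k) - K_{m,n}^{(2)}(k)I_{m,n}^{(1)}(k) \;=\; \tau_{m,n}\prod_{i=0}^{k-1}\frac{c_{2,n}(i)}{c_{1,n}(i)}.
\]

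Next I would use the sign information. For $m>0$, the first lemma of this section says that $-I_{m,n}^{(1)}, I_{m,n}^{(2)}, K_{m,n}^{(1)}, K_{m,n}^{(2)}$ are all positive at every $k$; for $m<0$, the proof of Proposition \ref{linear_independ} shows that both components of $I_{m,n}$ are negative, while $K_{m,n}^{(2)}>0$ and $K_{m,n}^{(1)}<0$. An elementary check (using also the normalization of $I_{m,n}(0)$) gives $\tau_{m,n}>0$ in both cases. In either sign regime, the two summands $K_{m,n}^{(1)}(k)I_{m,n}^{(2)}(k)$ and $-K_{m,n}^{(2)}(k)I_{m,n}^{(1)}(k)$ are individually nonnegative and sum to $\tau_{m,n}\prod_{i=0}^{k-1}c_{2,n}(i)/c_{1,n}(i)$; bounding each by their sum produces the first two inequalities.

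For the last two inequalities I would derive the shifted version
\[
K_{m,n}^{(1)}(k+1)I_{m,n}^{(2)}(k) - K_{m,n}^{(2)}(k)I_{m,n}^{(1)}(k+1) \;=\; \frac{\tau_{m,n}}{c_{1,n}(k)}\prod_{i=0}^{k-1}\frac{c_{2,n}(i)}{c_{1,n}(i)},
\]
which follows from the recurrence (\ref{recurrence_matrix}): both $K_{m,n}^{(1)}(k+1)$ and $I_{m,n}^{(1)}(k+1)$ are obtained via the first row of $C_{m,n}(k)$, namely $\bigl(1/c_{1,n}(k),\ -m/(a_{n+1}(k)c_{1,n}(k))\bigr)$. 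After substituting and expanding, the contributions proportional to $I_{m,n}^{(2)}(k)K_{m,n}^{(2)}(k)$ cancel, and what survives is exactly $1/c_{1,n}(k)$ times the unshifted Wronskian. Invoking the same sign analysis once more (together with Lemma \ref{special_soln_ineq_1}, which guarantees that $K_{m,n}^{(1)}(k+1)$ and $I_{m,n}^{(1)}(k+1)$ retain the sign of their $k$-th values) shows that $I_{m,n}^{(2)}(k)K_{m,n}^{(1)}(k+1)$ and $-I_{m,n}^{(1)}(k+1)K_{m,n}^{(2)}(k)$ are each nonnegative, so each is bounded by their sum and the remaining two bounds follow. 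The main obstacle here is not analytic but combinatorial: one has to verify carefully, across both cases $m>0$ and $m<0$, that the conventions of (\ref{nc_K_conditions}) and the normalization of $I_{m,n}(0)$ force the two summands in each Wronskian identity to have the same sign, legitimizing the ``each term $\le$ the positive sum'' step. Once the sign accounting is in place, the identities themselves are one-line consequences of (\ref{special_soln_perp}) and (\ref{recurrence_matrix}).
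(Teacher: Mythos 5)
Your proposal is correct and follows essentially the same route as the paper: establish the Wronskian identity $\langle K_{m,n}(k),I_{m,n}(k)^\perp\rangle=\tau_{m,n}\prod_{i<k}c_{2,n}(i)/c_{1,n}(i)$, deduce the first two bounds from the nonnegativity of both summands, and then use the first row of $C_{m,n}(k)$ to produce the $1/c_{1,n}(k)$-shifted identity (the paper substitutes $H^{(1)}(k)$ in terms of $H^{(1)}(k+1)$, you go forward; the cross-term cancellation is the same). The sign bookkeeping you flag as the "main obstacle" is indeed the only place requiring care and is handled by the paper identically via the preceding positivity lemma and the sign conditions (\ref{nc_K_conditions}).
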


\begin{proof}
Assume as usual that $m>0$.   Using the matrix valued recurrence relation (\ref{recurrence_matrix}) and properties of the inner product we get

\begin{equation}\label{Wron_like}
\begin{aligned}
\langle K_{m,n}(k+1), I_{m,n}(k+1)^\perp\rangle &= \prod_{i=0}^k \frac{c_{2,n}(i)}{c_{1,n}(i)}\left< \prod_{i=0}^k C_{m,n}(i) K_{m,n}(0), \prod_{i=0}^k\left(C_{m,n}(i)^{-1}\right)^tI_{m,n}(0)^\perp\right> \\
&=\prod_{i=0}^k \frac{c_{2,n}(i)}{c_{1,n}(i)}\langle K_{m,n}(0), I_{m,n}(0)^\perp \rangle =\prod_{i=0}^k \frac{c_{2,n}(i)}{c_{1,n}(i)}\tau_{m,n} .
\end{aligned}
\end{equation}
Writing out the above inner product gives

\begin{equation*}
\langle K_{m,n}(k+1), I_{m,n}(k+1)^\perp\rangle = K_{m,n}^{(1)}(k+1)I_{m,n}^{(2)}(k+1) + K_{m,n}^{(2)}(k+1)[-I_{m,n}^{(1)}(k+1)].
\end{equation*}
Consequently, the above equality (\ref{Wron_like}) implies that 

\begin{equation*}
K_{m,n}^{(1)}(k)I_{m,n}^{(2)}(k) \leq \tau_{m,n}\prod_{i=0}^{k-1}\frac{c_{2,n}(i)}{c_{1,n}(i)} \quad\textrm{and}\quad -K_{m,n}^{(2)}(k)I_{m,n}^{(1)}(k) \leq \tau_{m,n}\prod_{i=0}^{k-1}\frac{c_{2,n}(i)}{c_{1,n}(i)} .
\end{equation*}

%
%
To get the second set of inequalities we use the above equation (\ref{Wron_like}) and recurrence relations (\ref{recurrence_1}), (\ref{recurrence_2}) to get

\begin{equation*}
\begin{aligned}
\tau_{m,n}\prod_{i=0}^{k-1}\frac{c_{2,n}(i)}{c_{1,n}(i)} &= -I_{m,n}^{(1)}(k)K_{m,n}^{(2)}(k) + I_{m,n}^{(2)}(k)K_{m,n}^{(1)}(k) \\
&= \left(-c_{1,n}(k)I_{m,n}^{(1)}(k+1)-\frac{m}{a_{n+1}(k)}I_{m,n}^{(2)}(k)\right)K_{m,n}^{(2)}(k) \\
&+ I_{m,n}^{(2)}(k)\left(c_{1,n}(k)K_{m,n}^{(1)}(k+1)+\frac{m}{a_{n+1}(k)}K_{m,n}^{(2)}(k)\right) \\
&=c_{1,n}(k)\left(-I_{m,n}^{(1)}(k+1)K_{m,n}^{(2)}(k) + I_{m,n}^{(2)}(k)K_{m,n}^{(1)}(k+1)\right) .
\end{aligned}
\end{equation*}
From the last equality it follows that: 

\begin{equation*}
I_{m,n}^{(1)}(k+1)K_{m,n}^{(2)}(k) \le \frac{\tau_{m,n}}{c_{1,n}(k)}\prod_{i=0}^{k-1}\frac{c_{2,n}(i)}{c_{1,n}(i)} \quad\textrm{and}\quad I_{m,n}^{(2)}(k)K_{m,n}^{(1)}(k+1)\le \frac{\tau_{m,n}}{c_{1,n}(k)}\prod_{i=0}^{k-1}\frac{c_{2,n}(i)}{c_{1,n}(i)},
\end{equation*}
and the lemma is proved.
\end{proof}

We have gathered now enough information to analyze compactness of the inverse of $D$. The Fourier transform decomposes that inverse into a direct sum of parametrices $Q_{m,n}$, which in turn consist of the integral operators $X_{m,n}^{\alpha\beta}$, $Y_{m,n}^{\alpha\beta}$, $Z_{0,n}$. Those integral operators are compact operators, in fact even Hilbert-Schmidt operators as stated in the following theorem. 

\begin{theo}
Let $n\ge0$.   If $m\neq0$, then the integral operators $X_{m,n}^{\alpha\beta}$ and $Y_{m,n}^{\alpha\beta}$ defined in equation (\ref{int_ops_m_not_zero}) are Hilbert-Schmidt operators for $\alpha,\beta =1,2$, and, if $m=0$, the integral operator $Z_{0,n}$ defined in equation (\ref{int_ops_m_zero}) is a Hilbert-Schmidt operator. Moreover the Hilbert-Schmidt norms of $X_{m,n}^{\alpha\beta}$ and $Y_{m,n}^{\alpha\beta}$ go to zero as $|m|,n\to\infty$, and the Hilbert-Schmidt norm of $Z^{(0,n)}$ goes to zero as $n\to\infty$.
\end{theo}

\begin{proof}
We start with the case $m\neq0$.   Using the definition of $X_{m,n}^{\alpha\beta}$ and $Y_{m,n}^{\alpha\beta}$, it is easy to see that for $\alpha,\beta =1,2$ we have

\begin{equation}
\begin{aligned}
& \|X_{m,n}^{\alpha\beta}\|_{HS}^2 = \sum_{k=0}^\infty \frac{\left(I_{m,n}^{(\alpha)}(k)\right)^2}{a_{n-1+\alpha}(k)}\sum_{i=k+1}^\infty \left(\prod_{j=0}^{i-\beta}\frac{c_{1,n}(j)}{c_{2,n}(j)}\right)^2\cdot \frac{\left(K_{m,n}^{(\beta)}(i-\beta+1)\right)^2}{a_{n-1+\beta}(i-\beta+1)} \\
& \|Y_{m,n}^{\alpha\beta}\|_{HS}^2 = \sum_{k=0}^\infty \frac{\left(K_{m,n}^{(\alpha)}(k)\right)^2}{a_{n-1+\alpha}(k)}\sum_{i=0}^k \left(\prod_{j=0}^{i-\beta}\frac{c_{1,n}(j)}{c_{2,n}(j)}\right)^2\cdot \frac{\left(I_{m,n}^{(\beta)}(i-\beta+1)\right)^2}{a_{n-1+\beta}(i-\beta+1)} .
\end{aligned}
\end{equation}
There are eight norms to estimate; however that number can be reduced to four, since by Fubini's Theorem we have:

\begin{equation*}
\begin{aligned}
&\|X_{m,n}^{11}\|_{HS}^2 = \|Y_{m,n}^{11}\|_{HS}^2, \quad  \|X_{m,n}^{22}\|_{HS}^2 = \|Y_{m,n}^{22}\|_{HS}^2 \\
&\|X_{m,n}^{12}\|_{HS}^2 = \|Y_{m,n}^{21}\|_{HS}^2, \quad  \|X_{m,n}^{21}\|_{HS}^2 = \|Y_{m,n}^{12}\|_{HS}^2.
\end{aligned}
\end{equation*}
As usual consider the case $m>0$.  Using Lemmas \ref{special_soln_ineq_1}, \ref{sum_trick}, \ref{special_sol_bnd} in the formula for the Hilbert-Schmidt norm of $X_{m,n}^{11}$, we obtain:

\begin{equation*}
\begin{aligned}
\|X_{m,n}^{11}\|_{HS}^2 &= \sum_{k=0}^\infty \frac{\left(I_{m,n}^{(1)}(k)\right)^2}{a_{n}(k)}\sum_{i=k+1}^\infty \left(\prod_{j=0}^{i-1}\frac{c_{1,n}(j)}{c_{2,n}(j)}\right)^2\cdot\frac{\left(K_{m,n}^{(1)}(i)\right)^2}{a_{n}(i)} \\
&\le \frac{J_1(n)^2}{J_2(n)^2}\kappa\sum_{k=0}^\infty \frac{\left(I_{m,n}^{(1)}(k)\right)^2K_{m,n}^{(1)}(k+1)}{a_{n}(k)}\sum_{i=k+1}^\infty \frac{K_{m,n}^{(1)}(i)}{a_{n}(i)} \\
&\le \frac{\tau_{m,n}\kappa}{m}\cdot\frac{J_1(n)}{J_2(n)}\sum_{k=0}^\infty \frac{\left[-I_{m,n}^{(1)}(k)\right]K_{m,n}^{(1)}(k+1)}{a_{n}(k)} .
\end{aligned}
\end{equation*}
We estimate the last term in the following way:

\begin{equation*}
\begin{aligned}
&\frac{\tau_{m,n}\kappa J_1(n)}{mJ_2(n)}\sum_{k=0}^\infty \frac{\left[-I_{m,n}^{(1)}(k)\right]K_{m,n}^{(1)}(k+1)}{a_{n}(k)}\cdot\frac{K_{m,n}^{(2)}(k)}{K_{m,n}^{(2)}(k)}\\
 &\le \frac{J_2(n)}{J_1(n)}\frac{\tau_{m,n}^2\kappa J_1(n)}{J_2(n)}\sum_{k=0}^\infty \left(\varepsilon(m,n) + \frac{K_{m,n}^{(1)}(\infty)}{K_{m,n}^{(2)}(\infty)}\right)\frac{1}{a_{n}(k)} \\
&= \tau_{m,n}^2\kappa\left(\varepsilon(m,n) + \frac{K_{m,n}^{(1)}(\infty)}{K_{m,n}^{(2)}(\infty)}\right) s(n),
\end{aligned}
\end{equation*}
by using Lemmas \ref{special_soln_ineq_2}, \ref{special_sol_bnd}, and the inequality $1/c_{1,n}(k)\le\kappa$.   Therefore we have

\begin{equation*}
\|X_{m,n}^{11}\|_{HS}^2 \le \tau_{m,n}^2\kappa\left(\varepsilon(m,n) + \frac{K_{m,n}^{(1)}(\infty)}{K_{m,n}^{(2)}(\infty)}\right)s(n).
\end{equation*}

Similarly, using Lemmas \ref{special_soln_ineq_1}, \ref{sum_trick}, and \ref{special_sol_bnd}, we can estimate the Hilbert-Schmidt norm of $X_{m,n}^{22}$ by:

\begin{equation*}
\begin{aligned}
\|X_{m,n}^{22}\|_{HS}^2 &= \sum_{k=0}^\infty \frac{\left(I_{m,n}^{(2)}(k)\right)^2}{a_{n+1}(k)}\sum_{i=k+1}^\infty\left(\prod_{j=0}^{i-2}\frac{c_{1,n}(j)}{c_{2,n}(j)}\right)^2\frac{\left(K_{m,n}^{(2)}(i-1)\right)^2}{a_{n+1}(i-1)} \\
&\le \frac{J_1(n)}{J_2(n)^2}\sum_{k=0}^\infty \frac{\left(I_{m,n}^{(2)}(k)\right)^2K_{m,n}^{(2)}(k)}{a_{n+1}(k)}\sum_{i=k+1}^\infty \left(\prod_{j=0}^{i-2}c_{1,n}(j)\right)\frac{K_{m,n}^{(2)}(i-1)}{a_{n+1}(i-1)} \\
&\le \frac{J_1(n)^2}{J_2(n)^2}\frac{\tau_{m,n}}{m}\cdot\frac{J_2(n)}{J_1(n)}\sum_{k=0}^\infty \frac{I_{m,n}^{(2)}(k)K_{m,n}^{(2)}(k)}{a_{n+1}(k)}.
\end{aligned}
\end{equation*}
The above is then equal to

\begin{equation*}
\frac{\tau_{m,n}}{m}\cdot\frac{J_1(n)}{J_2(n)}\sum_{k=0}^\infty \frac{I_{m,n}^{(2)}(k)K_{m,n}^{(2)}(k)}{a_{n+1}(k)}\cdot\frac{I_{m,n}^{(1)}(k+1)}{I_{m,n}^{(1)}(k+1)} \le \tau_{m,n}^2\kappa\sum_{k=0}^\infty \frac{\varepsilon(m,n)}{a_{n+1}(k)} = \tau_{m,n}^2\kappa\varepsilon(m,n)s(n+1),
\end{equation*}
again by using Lemmas \ref{special_soln_ineq_2} and \ref{special_sol_bnd} and the fact that $1/c_{1,n}(k)$ is less than $\kappa$.   Thus we have

\begin{equation*}
\|X_{m,n}^{22}\|_{HS}^2 \le \tau_{m,n}^2\kappa\varepsilon(m,n)s(n+1).
\end{equation*}
Very similar arguments, using the same steps as above, show that

\begin{equation*}
\|X_{m,n}^{12}\|_{HS}^2 \le \tau_{m,n}^2\kappa\left(\varepsilon(m,n)+\frac{K_{m,n}^{(1)}(\infty)}{K_{m,n}^{(2)}(\infty)}\right)s(n)\quad\textrm{and}\quad \|X_{m,n}^{21}\|_{HS}^2 \le \tau_{m,n}^2\kappa\varepsilon(m,n)s(n+1).
\end{equation*}
Therefore for $m\neq0$ the Hilbert-Schmidt norms are finite for all of the operators.   

When $m=0$ we have:

\begin{equation*}
\|Z_{0,n}\|_{HS}^2 = \sum_{k=0}^\infty \frac{1}{a_{n+1}(k)}\sum_{i=0}^k \left(\prod_{j=i}^{k-1}c_{2,n}(j)\right)^2\frac{1}{a_{n}(i)} \le s(n)s(n+1),
\end{equation*}
since again $c_{2,n}(k)\le 1$. 

By assumption $s(n)$ goes to zero as $n\to\infty$, Lemma \ref{special_soln_ineq_2} implies that $\varepsilon(m,n)\to0$ as $|m|,n\to\infty$, and
the boundary condition given in equation (\ref{bndy_cond}) requires that

\begin{equation*}
\frac{K_{m,n}^{(1)}(\infty)}{K_{m,n}^{(2)}(\infty)}\to 0\quad\textrm{as}\quad |m|\to\infty .
\end{equation*}
uniformly in $n$. Consequently, the Hilbert-Schmidt norms of $X_{m,n}^{\alpha\beta}$ and $Y_{m,n}^{\alpha\beta}$ go to zero as $|m|,n\to\infty$, and the Hilbert-Schmidt norm of $Z^{(0,n)}$ goes to zero as $n\to\infty$, and the proof is finished.
\end{proof}

The following statement is an immediate consequence of the previous theorem, since the parametrix $Q^{(m,n)}$ is comprised of the integral operators estimated in it.

\begin{cor}\label{quant_parametrix_HS}
The parametrix $Q^{(m,n)}$ for $m\neq 0$ and $n\ge0$ is a Hilbert-Schmidt operator and the Hilbert-Schmidt norm of $Q^{(m,n)}$ goes to zero as $|m|,n\to\infty$.   Moreover for $m=0$ the parametrix $Q^{(0,n)}$ is a Hilbert-Schmidt operator and its Hilbert-Schmidt norm goes to zero as $n\to\infty$.
\end{cor}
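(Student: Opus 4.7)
The plan is to deduce the corollary directly from the preceding theorem by exploiting the block structure of $Q^{(m,n)}$ described in Lemma \ref{para_equiv_form}. Since $Q^{(m,n)}$ acts on two-component vectors, it can be viewed as a $2\times 2$ block operator on $\ell_{a_n}^2(\Z_{\ge 0})\oplus\ell_{a_{n+1}}^2(\Z_{\ge 0})$, and its Hilbert--Schmidt norm squared equals the sum of the Hilbert--Schmidt norms squared of its four matrix entries. By Lemma \ref{para_equiv_form}, each entry is, up to the scalar factor $1/\tau_{m,n}$, a finite linear combination of operators of the form $X_{m,n}^{\alpha\beta}$ and $Y_{m,n}^{\alpha\beta}$ for $m\ne 0$, and reduces simply to $Z_{0,n}$ (in a single block) when $m=0$.

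First I would address the case $m\ne 0$. Applying the triangle inequality for the Hilbert--Schmidt norm to each of the four expressions $p_{m,n}^{(i)}$ gives
\begin{equation*}
\|Q^{(m,n)}\|_{HS}\le \frac{1}{\tau_{m,n}}\sum_{\alpha,\beta=1}^{2}\Bigl(\|X_{m,n}^{\alpha\beta}\|_{HS}+\|Y_{m,n}^{\alpha\beta}\|_{HS}\Bigr).
\end{equation*}
The previous theorem provides estimates of the shape $\|X_{m,n}^{\alpha\beta}\|_{HS}^{2},\|Y_{m,n}^{\alpha\beta}\|_{HS}^{2}\le \tau_{m,n}^{2}\,\eta_{m,n}^{\alpha\beta}$, where each $\eta_{m,n}^{\alpha\beta}$ is a product of factors drawn from $\kappa$, $\varepsilon(m,n)$, $s(n)$, $s(n+1)$, and $K_{m,n}^{(1)}(\infty)/K_{m,n}^{(2)}(\infty)$. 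Taking square roots and dividing by $\tau_{m,n}$ therefore cancels the $\tau_{m,n}$ factor cleanly, leaving a bound on $\|Q^{(m,n)}\|_{HS}$ by a finite sum of square roots of the $\eta_{m,n}^{\alpha\beta}$.

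The convergence as $|m|,n\to\infty$ is then immediate from the three inputs already established: $s(n)\to 0$ as $n\to\infty$ by the summability hypothesis on the weights; $\varepsilon(m,n)\to 0$ as $|m|,n\to\infty$ by Lemma \ref{special_soln_ineq_2}; and $K_{m,n}^{(1)}(\infty)/K_{m,n}^{(2)}(\infty)\to 0$ as $|m|\to\infty$ uniformly in $n$ by the defining sign condition (\ref{nc_K_conditions}) on any quantum $K$ function. Each $\eta_{m,n}^{\alpha\beta}$ is therefore a product of bounded factors with at least one factor tending to zero, so the whole sum vanishes in the limit.

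Finally, for $m=0$ the formula in Lemma \ref{para_equiv_form} shows that $Q^{(0,n)}$ has only one nontrivial block, namely $Z_{0,n}$, and the theorem gives $\|Z_{0,n}\|_{HS}^{2}\le s(n)s(n+1)\to 0$ as $n\to\infty$. There is no real obstacle in this argument — the computational work has already been carried out in the theorem; the corollary is a bookkeeping step that assembles those block-wise bounds into a bound on the full parametrix and invokes the same three vanishing estimates to conclude.
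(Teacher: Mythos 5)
Your argument is correct and follows the same route the paper intends: the paper dismisses the corollary in one sentence as an immediate consequence of the preceding theorem, and what you have written is exactly the bookkeeping that fills in that sentence — assembling $Q^{(m,n)}$ from its blocks $X_{m,n}^{\alpha\beta}$, $Y_{m,n}^{\alpha\beta}$ (and $Z_{0,n}$ when $m=0$), using the triangle inequality for the Hilbert--Schmidt norm, observing that the $\tau_{m,n}^2$ factor in the block estimates cancels the $1/\tau_{m,n}$ prefactor, and invoking the three vanishing facts ($s(n)\to 0$, $\varepsilon(m,n)\to 0$, $K_{m,n}^{(1)}(\infty)/K_{m,n}^{(2)}(\infty)\to 0$ uniformly in $n$) already established. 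No gap.
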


We can now close out this section by proving the main theorem of this paper, that the Dirac operator defined by equation (\ref{D_Def}) subject to the boundary condition (\ref{bndy_cond}) has a compact inverse.

\begin{proof}(Proof of Theorem \ref{quant_Q_theorem})
It follows from Proposition \ref{Db_parametrix_rel} that inverse $Q$ of the Dirac operator $D$ is (essentially) a direct sum of $Q^{(m,n)}$ and its analogs for ``negative" terms in the Fourier decomposition.   Corollary \ref{quant_parametrix_HS} shows that $Q^{(m,n)}$ is a Hilbert-Schmidt operator for all $m\in\Z$ and $n\ge0$.   Moreover the same corollary stipulates that the Hilbert-Schmidt norms of $Q^{(m,n)}$ go to zero as $|m|,n\to\infty$.   This means that since $Q$ is a direct sum of compact operators with norms going to zero, which implies that $Q$ is a compact operator.     Thus the proof is complete.
\end{proof}

\end{document}